\newtheorem{theorem}{Theorem}[section]
\newtheorem{lemma}[theorem]{Lemma}
\newtheorem{corollary}[theorem]{Corollary}
\theoremstyle{definition}
\newtheorem{definition}[theorem]{Definition}
\newtheorem{example}[theorem]{Example}
\theoremstyle{remark}
\newtheorem*{remark}{Remark}
\numberwithin{equation}{section}
\newcommand{\RE}{\mbox{$\mathbb{R}$}}
\newcommand{\CEP}[1]{\mbox{$\mathbb{C}^{#1}$}}
\newcommand{\C}{\mbox{$\mathbb{C}$}}
\newcommand{\B}{\mbox{$\mathbb{B}$}}
\newcommand{\Eo}{\mbox{$\mathcal{E}_{0}$}}
\newcommand{\Ep}{\mbox{$\mathcal{E}_{p}$}}
\newcommand{\Ec}{\mbox{$\mathcal{E}_{\chi}$}}
\newcommand{\ddcn}[1]{\mbox{$\left(dd^{c}#1\right)^{n}$}}
\newcommand{\vertiii}[1]{{\left\vert\kern-0.25ex\left\vert\kern-0.25ex\left\vert #1\right\vert\kern-0.25ex\right\vert\kern-0.25ex\right\vert}}
\begin{document}

\author{Per \AA hag}
\address{Department of Mathematics and Mathematical Statistics\\ Ume\aa \ University\\ SE-901 87 Ume\aa \\ Sweden}
\email{Per.Ahag@math.umu.se}
\author{Rafa\l\ Czy{\.z}}
\address{Institute of Mathematics\\ Jagiellonian University\\ \L ojasiewicza 6\\ 30-348 Krak\'ow\\ Poland}
\email{Rafal.Czyz@im.uj.edu.pl}
\keywords{compact K\"ahler manifold, complex Monge-Amp\`{e}re operator, plurisubharmonic function, subharmonic function, Moser-Trudinger inequality, Sobolev inequality}
\subjclass[2010]{Primary 46E35, 32U20; Secondary 32W20, 32Q20, 26D10, 35J60}
\title{On the Moser--Trudinger inequality in complex space}

\begin{abstract} In this paper we prove the pluricomplex counterpart of the Moser-Trudinger and Sobolev inequalities in complex space. We consider these inequalities for plurisubharmonic functions with finite pluricomplex energy, and estimate the concerned constants.
\end{abstract}

\maketitle

\begin{center}\bf
\today
\end{center}

\section{Introduction}

\bigskip

 Many researchers in partial differential equations and calculus of variation are interested in Sobolev type inequalities, or Sobolev embedding theorems as some wish to call them. The borderline case when the dimension is two is sometimes known as the \emph{Moser-Trudinger inequality} or \emph{Trudinger-Moser inequality} after the work of Trudinger~\cite{trudinger} in 1967 and Moser~\cite{moser} in 1971. To this day these ideas are still used in ongoing research (see e.g.~\cite{beckner,CL,de,LL,PSSW,yuan}). In this paper we shall prove the pluricomplex counterpart to the  Moser-Trudinger and Sobolev inequalities.  We shall now continue with a brief discussion about the setting, and we refer the reader to Section~\ref{sec_Background} for a more detailed background.

 Let $\Omega$ be an open set in $\CEP{n}$. An upper semicontinuous function $u:\Omega\to\RE\cup\{-\infty\}$ is called \emph{plurisubharmonic} if the Laplacian of $u$ is, in the sense of distributions, non-negative along each complex line that intersects $\Omega$. We shall always assume that a plurisubharmonic function is defined on a so called hyperconvex domain $\Omega\subset\CEP{n}$. This assumption is made to ensure a
 satisfying amount of plurisubharmonic functions with certain properties. As the abstract reveal we are interested in plurisubharmonic functions
 with finite pluricomplex energy. To be able to define these functions we start by defining what we recognize as the pluricomplex counterpart of test functions in the theory of distributions. We say that
a plurisubharmonic function $\varphi$ defined on $\Omega$ belongs to $\Eo$ $(=\Eo(\Omega))$ if $\varphi$ is a bounded function,
$\lim_{z\rightarrow\xi} \varphi (z)=0$, for every $\xi\in\partial\Omega$, and $\int_{\Omega} \ddcn{\varphi}<\infty$, where
$\ddcn{\cdot}$ is the complex Monge-Amp\`{e}re operator. Finally, we say that $u\in \Ep$ $(=\Ep(\Omega))$ if $u$ is a plurisubharmonic function
defined on $\Omega$ and there exists a
decreasing sequence, $\{\varphi_{j}\}$, $\varphi_{j}\in\Eo$, that converges pointwise to $u$ on $\Omega$,
as $j$ tends to $\infty$, and
\[
\sup_{j} e_p(\varphi_j)=\sup_{j}\int_{\Omega}
(-\varphi_{j})^{p}\ddcn{\varphi_{j}}< \infty\, .
\]
This definition implies that if $u\in\Ep$, then $e_p(u)<\infty$. This justify that we say that a function $u\in\Ep$ have finite pluricomplex $p$-energy, or simply
finite pluricomplex energy. The energy cones, $\Ep$,  were introduced and studied in~\cite{cegrell_pc}, and the growing use of complex Monge-Amp\`{e}re techniques in applications makes this framework of significant importance (see e.g.~\cite{ACKPZ,BBW,demailly_hiep,Dinh_et al,diller}).

\bigskip

  The first inequality we prove is the following.

\medskip

\noindent \textbf{The pluricomplex Moser-Trudinger inequality.} \emph{Let $\Omega$ be a bounded hyperconvex domain in $\C^n$, $n\geq 2$.  Then there exist constants $A(p,n,\Omega)$ and $B(p,n,\Omega)$ depending only on $p, n, \Omega$ such that for any $u\in \mathcal E_p$ we have that}
\begin{equation}\label{intr_MS}
\log \int_{\Omega}e^{-u}d\lambda_{2n}\leq A(p,n,\Omega)+ B(p,n,\Omega)e_p(u)^{\frac 1p}\, ,
\end{equation}
\emph{where $d\lambda_{2n}$ is the Lebesgue measure in $\mathbb C^n$. For any given $0<\epsilon<1$ we can take the constants $A(p,n,\Omega)$ and $B(p,n,\Omega)$ to be}
\begin{align*}
A(p,n,\Omega)& =\log \left(\left(\pi^n+\beta(n)\frac {\epsilon n}{(n-\epsilon n)^n}\right)\operatorname{diam}(\Omega)^{2n}\right) \, \text { and } \\[2mm] B(p,n,\Omega) & =(2\epsilon n)^{-\frac np}\, ,
\end{align*}
\emph{where $\beta(n)$ is a constant depending only on $n$. Furthermore, we have the following estimates on $B(p,n,\Omega)$}
\begin{equation}\label{1.2}
\frac {p}{(4\pi)^{\frac np}(n+p)^{1+\frac np}}\leq B(p,n,\Omega)\, ,
\end{equation}
\emph{and if $\Omega=\B$ is the unit ball, then we have that}
\begin{equation}\label{1.3}
\frac {p}{(4\pi)^{\frac np}(n+p)^{1+\frac np}}\leq B(p,n,\mathbb{B})\leq \left(\frac {p^{p-1}}{(4\pi)^n(n+1)^{n+1}(n+p)^{p-1}}\right)^{\frac 1p}\, .
\end{equation}

\medskip

The proof of this theorem is divided into parts. Inequality~(\ref{intr_MS}) is proved in Theorem~\ref{t11}. This inequality was first proved for $p=1$ in~\cite{C1},
and two months later another proof appeared~\cite{BB1} that generated slightly better estimates. In Theorem~\ref{Best} we present the proof of our estimates. In the proof of the upper bound we are using a slightly modified version of Moser's original inequality.

\bigskip

In Section~\ref{sec_sobolev} we shall prove with help of the pluricomplex Moser-Trudinger inequality the following inequality.

\medskip

\noindent \textbf{The pluricomplex Sobolev inequality.} \emph{Let $\Omega$ be a bounded hyperconvex domain in $\C^n$, $n\geq 2$, and let $u\in \Ep$, $p>0$. Then for all $q>0$ there exists a constant $C(p,q,n,\Omega)>0$ depending only on $p,q, n,\Omega$ such that}
\begin{equation}\label{intr_lq theorem}
\|u\|_{L^q}\leq C(p,q,n,\Omega)e_p(u)^{\frac {1}{n+p}}\, .
\end{equation}
\emph{In fact one can take}
\begin{equation}\label{intr_const c}
C(p,q,n,\Omega)=e^{\frac 1qA(p,n,\Omega)}\frac {(n+p)B(p,n,\Omega)^{\frac {p}{n+p}}}{n^{\frac {n}{n+p}}p^{\frac {p}{n+p}}}\, \Gamma\left(\frac {nq}{n+p}+1\right)^{\frac 1q},
\end{equation}
\emph{where the constants $A(p,n,\Omega)$ and $B(p,n,\Omega)$ are given in the above Moser-Trudinger inequality. Furthmore, $\Gamma$ denotes the gamma function.   In addition, inequality (\ref{intr_lq theorem}) may be written in the form}
\[
\|u\|_{L^q}\leq D(p,n,\Omega)q^{\frac {n}{n+p}}e_p(u)^{\frac {1}{n+p}} \, ,
\]
\emph{where the constant $D(p,n,\Omega)$ does not depend on $q$.}

\medskip

The pluricomplex Sobolev inequality is proved in Theorem~\ref{lq estimate}. It should be noted that Theorem~\ref{lq estimate} is more general than the above statement, this due to presentational reasons. Our work in Section~\ref{sec_sobolev} was inspired by~\cite{BB1,Blo96,CP2,kol}. Next let $C(p,q,n,\mathbb{B})$ denote the optimal constant in~(\ref{intr_lq theorem}), i.e. the infimum of all admissible constants. This optimal constant is classically of great importance. For example it is connected
to the isoperimetric inequality and therefore classically to symmetrization of functions (see e.g.~\cite{talenti}). In pluripotential theory there have been many attempts to symmetrize plurisubharmonic functions, but few progress have been made in that direction since plurisubharmonicity might be lost during a symmetrization procedure (\cite{Baernstein}). For a positive result see~\cite{BB2}. A strong trend today is to try to prove a pluricomplex counterpart of Talenti's theorem for the Laplacean  (\cite{talenti2}, see also \cite[Theorem 10.2]{Blo14}). A successful attempt would not only imply simplified proofs, but also many of the biggest unsolved problems would be conquered. For further information and details we refer to Section~10 in the excellent survey~\cite{Blo14} written by B\l ocki. With this in mind we shall in Section~\ref{sec_optimal} prove that
\[
C(1,1,n,\mathbb{B})=\frac {\pi^{\frac {n^2}{n+1}}}{4^{\frac {n}{n+1}}n!(n+1)^{\frac {n}{n+1}}}\, .
\]

\section{Background}\label{sec_Background}

 In this section we shall give some necessary background on pluripotential theory. For further information we refer to~\cite{cegrell_gdm,hab,demailly_bok,klimek,kol2,PSS}.

 A set $\Omega\subseteq\CEP{n}$, $n\geq 1$, is called a \emph{bounded hyperconvex domain} if it is a bounded, connected, and open set such
 that there exists a bounded plurisubharmonic function $\varphi:\Omega\rightarrow (-\infty,0)$ such that the closure of the
 set $\{z\in\Omega : \varphi(z)<c\}$ is compact in $\Omega$, for every $c\in (-\infty, 0)$.

We say that a plurisubharmonic function $\varphi$ defined on $\Omega$ belongs to $\Eo$ $(=\Eo
(\Omega))$ if it is a bounded function,
\[
\lim_{z\rightarrow\xi} \varphi (z)=0 \quad \text{ for every } \xi\in\partial\Omega\, ,
\]
and
\[
\int_{\Omega} \ddcn{\varphi}<\infty\, ,
\]
where $\ddcn{\cdot}$ is the complex Monge-Amp\`{e}re operator. Furthermore, we say that $u\in \Ep$ $(=\Ep(\Omega))$, $p>0$, if $u$ is a plurisubharmonic function
defined on $\Omega$ and there exists a decreasing sequence, $\{\varphi_{j}\}$, $\varphi_{j}\in\Eo$, that converges pointwise to $u$ on $\Omega$,
as $j$ tends to $\infty$, and
\[
\sup_{j} e_p(\varphi_j)=\sup_{j}\int_{\Omega}
(-\varphi_{j})^{p}\ddcn{\varphi_{j}}< \infty\, .
\]
We shall need on several occasions the following two inequalities. The inequality in Lemma~\ref{est} follows by standard approximation techniques from the
work of B\l ocki in~\cite{Blo93}.
\begin{lemma}\label{est} Let $u\in \Eo$, $v\in\Ep$, $p>0$, $1\leq k\leq n$. Then
\[
\int_{\Omega} (-v)^{p+k}(dd^c u)^n\leq (p+k)\cdots (p+1)\|u\|^k_{L^{\infty}}\int_{\Omega}(-v)^{p}(dd^cv)^k\wedge(dd^cu)^{n-k}.
\]
\end{lemma}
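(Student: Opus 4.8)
The plan is to prove the inequality first for $v \in \Eo$ (so $v$ is bounded) by an integration-by-parts argument, and then pass to general $v \in \Ep$ by a decreasing approximation. For bounded $v$, I would proceed by a double induction: an inner induction on the exponent attached to $(-v)$, and an outer induction that lowers the power of $dd^c u$ from $n$ down to $k$. The basic building block is the standard integration-by-parts identity for the Monge--Amp\`ere operator on $\Eo$: for $\varphi, \psi \in \Eo$ and a closed positive current $T$ of the appropriate bidegree,
\[
\int_\Omega (-\psi)\, dd^c\varphi \wedge T \;=\; \int_\Omega (-\varphi)\, dd^c\psi \wedge T \;\geq\; 0 .
\]
From this, together with $-\psi \le \|\psi\|_{L^\infty}$ whenever $dd^c\psi$ is integrated against a positive current, one gets estimates of the shape $\int (-v)^{m}\, dd^c u \wedge T \le (\text{const})\,\|u\|_{L^\infty} \int (-v)^{m-1}\, dd^c v \wedge T$; iterating trades one factor $dd^c u$ for one factor $dd^c v$ while dropping the power of $(-v)$ by one and producing the combinatorial coefficient $(p+k)\cdots(p+1)$.

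More concretely, I would first establish, for $v\in\Eo$, $w\in\Eo$ and $0\le j\le n-k$, the intermediate family of inequalities
\[
\int_\Omega (-v)^{p+k}\,(dd^c u)^{k+j}\wedge(dd^c w)^{n-k-j}
\;\le\; C_{k}\,\|u\|_{L^\infty}^{j}\int_\Omega (-v)^{p+k-j}\,(dd^c v)^{j}\wedge (dd^c u)^{k}\wedge (dd^c w)^{n-k-j},
\]
or something close to it, and specialize at the end. The engine is: write $(-v)^{p+k} = (-v)^{p+k-1}(-v)$, move one $dd^c u$ onto $v$ using the integration-by-parts identity (legitimate since all functions lie in $\Eo$ and the currents involved are closed and positive), use $(-v)\le\|u\|_{L^\infty}\cdot\big((-v)/\|u\|_{L^\infty}\big)$ — more precisely bound the freshly created $(-u)$ factor by $\|u\|_{L^\infty}$ — and absorb a binomial-type constant. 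Careful bookkeeping of the constants generated at each of the first $k$ steps should yield exactly the product $(p+k)(p+k-1)\cdots(p+1)$; a convexity/Hölder step of the form $\int (-v)^{p+k-1}(\cdots) \le \big(\int(-v)^{p+k}(\cdots)\big)^{(p+k-1)/(p+k)}\big(\int(\cdots)\big)^{1/(p+k)}$ may be needed to close the induction, and then the powers are reconciled at the end.

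Once the inequality is known for $v\in\Eo$, I take a decreasing sequence $\varphi_j\in\Eo$ with $\varphi_j\downarrow v$ and $\sup_j e_p(\varphi_j)<\infty$, which exists by definition of $\Ep$. On the left-hand side, $(dd^c u)^n$ does not depend on $v$, and $(-\varphi_j)^{p+k}\uparrow(-v)^{p+k}$, so monotone convergence gives the correct limit. On the right-hand side, the mixed Monge--Amp\`ere measures $(dd^c\varphi_j)^k\wedge(dd^c u)^{n-k}$ converge weakly to $(dd^c v)^k\wedge(dd^c u)^{n-k}$ by the fundamental convergence theorem for decreasing sequences in $\E$, and one controls $\limsup_j \int(-\varphi_j)^p(dd^c\varphi_j)^k\wedge(dd^c u)^{n-k}$ from above by $\int(-v)^p(dd^c v)^k\wedge(dd^c u)^{n-k}$ using lower semicontinuity of $-\varphi_j$ together with boundedness of the energies (this is the standard argument from~\cite{cegrell_pc}). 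Passing to the limit on both sides yields the lemma in full generality.

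The step I expect to be the main obstacle is getting the combinatorial constant exactly right: the integration-by-parts manipulation naturally produces the \emph{shape} $(\text{const})\|u\|_{L^\infty}^k$, but pinning down that the constant is precisely $(p+k)(p+k-1)\cdots(p+1)$ — rather than something larger — requires choosing the induction so that at the $i$-th step one extracts exactly the factor $(p+i)$, which in turn forces a particular order of operations (lowering the $(-v)$-power and converting $dd^c u\to dd^c v$ in lockstep) and a clean use of the Hölder step. Verifying that each integration by parts is justified — i.e.\ that all the intermediate currents remain closed, positive, and of the right bidegree, and that all the integrals are finite so that no $\infty-\infty$ arises — is routine but must be done with the $\Eo$ hypothesis on $u$ and $v$ in hand, and is the reason the result is stated for $u\in\Eo$ before approximating.
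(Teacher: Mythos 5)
Your overall strategy --- prove the estimate for bounded $v$ by integrating by parts $k$ times, at each step trading one factor of $dd^cu$ for one factor of $dd^cv$ while lowering the exponent of $(-v)$ by one, then pass to general $v\in\Ep$ by a decreasing approximation --- is the intended one; the paper gives no proof of Lemma~\ref{est} and only refers to ``standard approximation techniques'' applied to B\l ocki's estimates. Your description of the limit passage (monotone convergence on the left-hand side, convergence of the mixed Monge--Amp\`ere measures and control of the $\limsup$ of the right-hand energies) is also the standard and correct way to finish.

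The gap is in the middle: you never write down the single computation that closes the induction, and the device you offer in its place (a H\"older/interpolation step) would not produce the stated constant. The engine is the Leibniz identity, for $q>1$ and smooth $v$ (then approximate),
\[
dd^c\bigl(-(-v)^{q}\bigr)=q(-v)^{q-1}\,dd^cv-q(q-1)(-v)^{q-2}\,dv\wedge d^cv\, ,
\]
so that for a closed positive $(n-1,n-1)$-current $T$ built from functions in $\Eo$, integration by parts and discarding the gradient term, which has the favorable sign, give
\[
\int_{\Omega}(-v)^{q}\,dd^cu\wedge T=\int_{\Omega}(-u)\,dd^c\bigl(-(-v)^{q}\bigr)\wedge T\leq q\,\|u\|_{L^{\infty}}\int_{\Omega}(-v)^{q-1}\,dd^cv\wedge T\, .
\]
Applying this at step $i=1,\dots,k$ with $q=p+k-i+1$ and $T=(dd^cv)^{i-1}\wedge(dd^cu)^{n-i}$ yields the lemma for $v\in\Eo$ with the constant $(p+k)(p+k-1)\cdots(p+1)\|u\|_{L^{\infty}}^{k}$ appearing automatically; the ``main obstacle'' you flag (pinning down the combinatorial constant) is in fact a byproduct of the Leibniz rule, not the result of any optimization. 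By contrast, the interpolation inequality you propose, of the form $\int(-v)^{p+k-1}(\cdots)\leq\bigl(\int(-v)^{p+k}(\cdots)\bigr)^{(p+k-1)/(p+k)}\bigl(\int(\cdots)\bigr)^{1/(p+k)}$, reintroduces the quantity being estimated and produces constants of the wrong shape, and the auxiliary function $w$ and double induction in your ``intermediate family'' are unnecessary: a single chain of $k$ applications of the displayed inequality suffices. As written, your argument asserts the right conclusion but does not actually derive the factor $(p+i)$ at each stage; supplying the identity above is what turns the sketch into a proof.
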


Next theorem  was proved in~\cite{persson} for $p\geq 1$, and for $0<p<1$
in~\cite{czyz_energy} (see also~\cite{cegrell_pc,CP}).

\begin{theorem}\label{thm_holder} Let $p>0$ and $u_0,u_1,\ldots ,
u_n\in\Ep$. If $n\geq 2$, then
\begin{multline*}
\int_\Omega (-u_0)^p dd^c u_1\wedge\cdots\wedge dd^c u_n\\ \leq
d(p,n,\Omega)\; e_p(u_0)^{p/(p+n)}e_p(u_1)^{1/(p+n)}\cdots
e_p(u_n)^{1/(p+n)}\, ,
\end{multline*}
where
\[d(p,n,\Omega)=
\begin{cases}
 \left(\frac1p\right)^{\frac {n}{n-p}} & \text{if $0<p<1$,}\\

1 & \text{if $p=1$,}\\
p^{\frac{p\alpha (n,p)}{p-1}} & \text{if $p>1$,}
\end{cases}
\]
and $\alpha (n,p)=(p+2)\left(\frac{p+1}{p}\right)^{n-1}-(p+1)$.
\end{theorem}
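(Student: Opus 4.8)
The plan is to reduce to the class \Eo{} and then run a Cauchy--Schwarz type iteration in the spirit of Cegrell's treatment of the case $p=1$, the one new ingredient for general $p\geq 1$ being a H\"older step that absorbs the weight $(-u_0)^p$. First, using the defining decreasing sequences in the definition of \Ep{} together with the standard convergence theorems for the complex Monge--Amp\`ere operator in the Cegrell classes, it is enough to prove the inequality when $u_0,\dots,u_n\in\Eo$ are bounded, since then both sides pass to the limit along the approximating sequences. The range $0<p<1$ does not fit this scheme and is treated separately (see~\cite{czyz_energy}); I describe the argument for $p\geq 1$.

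The heart of the matter is the two-function estimate
\begin{equation}\label{plan-bb}
\int_\Omega(-u)^p\,dd^c v\wedge T\leq C_p\left(\int_\Omega(-u)^p\,dd^c u\wedge T\right)^{\frac{p}{p+1}}\left(\int_\Omega(-v)^p\,dd^c v\wedge T\right)^{\frac{1}{p+1}},
\end{equation}
valid for $u,v\in\Eo$ and every closed positive current $T$ of bidegree $(n-1,n-1)$ which is a wedge product of Monge--Amp\`ere currents of functions in \Eo, with $C_p=p^{p/(p-1)}$ for $p>1$ and $C_1=1$. To prove~\eqref{plan-bb} I would integrate by parts---the boundary term vanishing since $v\in\Eo$---to rewrite the left-hand side as $-\int_\Omega(-v)\,dd^c\big((-u)^p\big)\wedge T$, insert the identity
\[
dd^c\big((-u)^p\big)=p(p-1)(-u)^{p-2}\,du\wedge d^c u-p(-u)^{p-1}\,dd^c u,
\]
and observe that for $p\geq 1$ the first term, being a nonnegative current, contributes with a sign that yields $\int_\Omega(-u)^p\,dd^c v\wedge T\leq p\int_\Omega(-v)(-u)^{p-1}\,dd^c u\wedge T$. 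Applying H\"older's inequality with conjugate exponents $p$ and $p/(p-1)$ to the positive measure $dd^c u\wedge T$, then repeating with the roles of $u$ and $v$ interchanged to control the mixed integral that arises and eliminating it, produces~\eqref{plan-bb}; in the limit $p=1$ this is exactly Cegrell's Cauchy--Schwarz inequality, with $C_1=1$.

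With~\eqref{plan-bb} at hand I would peel off the distinct factors $dd^c u_1,\dots,dd^c u_n$ one at a time. Applying~\eqref{plan-bb} with $T=dd^c u_2\wedge\cdots\wedge dd^c u_n$ bounds $\int_\Omega(-u_0)^p\,dd^c u_1\wedge\cdots\wedge dd^c u_n$ by a product involving $\int_\Omega(-u_0)^p\,dd^c u_0\wedge dd^c u_2\wedge\cdots\wedge dd^c u_n$ and $\int_\Omega(-u_1)^p\,dd^c u_1\wedge dd^c u_2\wedge\cdots\wedge dd^c u_n$, in each of which the weight variable now coincides with one Monge--Amp\`ere factor. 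Iterating---always forcing this coincidence and then bootstrapping the resulting coupled inequalities exactly as in Cegrell's argument---reduces everything to the pure energies $e_p(u_j)=\int_\Omega(-u_j)^p(dd^c u_j)^n$. The exponents in the final estimate are forced by homogeneity: rescaling $u_j\mapsto\lambda_j u_j$ with $\lambda_j>0$ shows that $e_p(u_0)$ must occur with exponent $p/(p+n)$ and each $e_p(u_j)$, $j\geq 1$, with exponent $1/(p+n)$. The constant is the product of the powers of $C_p$ accumulated along the iteration; summing the resulting geometric-type series in $p$ identifies it as $d(p,n,\Omega)=p^{p\alpha(n,p)/(p-1)}$ with $\alpha(n,p)=(p+2)\big(\tfrac{p+1}{p}\big)^{n-1}-(p+1)$, and for $p=1$ all factors equal $1$, so $d(1,n,\Omega)=1$.

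I expect the main obstacle to be the rigorous justification of the building-block argument: integration by parts, the pointwise identity for $dd^c\big((-u)^p\big)$, and H\"older's inequality all need to be established for merely bounded (not smooth) plurisubharmonic functions and mixed Monge--Amp\`ere currents, and when $1\leq p<2$ one must control the singularity of $(-u)^{p-2}$ as $u\to 0$ near $\partial\Omega$; this is handled by proving the estimate first for more regular approximants and then passing to the limit, using the stability theory for the complex Monge--Amp\`ere operator in the Cegrell classes. A secondary difficulty is the bookkeeping needed to pin down the precise constant $d(p,n,\Omega)$. Finally, the case $0<p<1$ genuinely requires a different argument, since there the term $p(p-1)(-u)^{p-2}\,du\wedge d^c u$ has the opposite sign and cannot simply be discarded; the treatment in~\cite{czyz_energy} produces the constant $d(p,n,\Omega)=(1/p)^{n/(n-p)}$.
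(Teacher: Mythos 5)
The paper does not prove Theorem~\ref{thm_holder} itself but imports it from Persson~\cite{persson} (for $p\geq 1$) and from~\cite{czyz_energy} (for $0<p<1$), and your outline reconstructs essentially the argument of those references: reduction to $\mathcal{E}_0$, the two-function estimate obtained by integrating by parts, discarding the nonnegative gradient term $p(p-1)(-u)^{p-2}du\wedge d^cu$ for $p\geq 1$, applying H\"older with exponents $p$ and $p/(p-1)$, and bootstrapping the symmetric pair of inequalities to get the constant $p^{p/(p-1)}$, followed by the peeling iteration that accumulates to $p^{p\alpha(n,p)/(p-1)}$. Your key computation checks out (substituting the bound for the mixed term back in does yield exponents $p/(p+1)$, $1/(p+1)$ and constant $p^{p/(p-1)}$), so this is correct in outline and takes the same route as the proofs the paper cites.
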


\section{The pluricomplex Moser-Trudinger inequality}\label{sec_Moser-Trudinger}

The aim of this section is to prove the following Moser-Trudinger inequality for $\Ep$, $p>0$.

\begin{theorem}\label{t11}
Let $\Omega$ be a bounded hyperconvex domain in $\mathbb C^n$. Then there exist constants $A(p,n,\Omega)$ and $B(p,n,\Omega)$ depending only on $p, n, \Omega$ such that for any $u\in \mathcal E_p$ we have that
\[
\log \int_{\Omega}e^{-u}d\lambda_{2n}\leq A(p,n,\Omega)+ B(p,n,\Omega)e_p(u)^{\frac 1p}\, .
\]
Furthermore, for any given $0<\epsilon<1$ we can take the constants $A(p,n,\Omega)$ and $B(p,n,\Omega)$ to be
\[
A(p,n,\Omega)=\log \left(\left(\pi^n+\beta(n)\frac {\epsilon n}{(n-\epsilon n)^n}\right)\operatorname{diam}(\Omega)^{2n}\right) \, \text { and } \,  B(p,n,\Omega)=(2\epsilon n)^{-\frac np}\, ,
\]
where $\beta(n)$ is a constant depending only on $n$.
\end{theorem}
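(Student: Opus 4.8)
The plan is to reduce to bounded $u$, rewrite $\int_\Omega e^{-u}\,d\lambda_{2n}$ over the super-level sets of $-u$, and control the distribution function $\lambda_{2n}(\{u<-t\})$ through the Monge--Amp\`ere capacity. First I would reduce to $u\in\Eo$: given $u\in\Ep$, choose a decreasing sequence $\{\varphi_j\}\subset\Eo$ with $\varphi_j\searrow u$ and $\sup_j e_p(\varphi_j)<\infty$; then $e_p(\varphi_j)\to e_p(u)$ by the convergence of energy along decreasing sequences (cf.\ \cite{cegrell_pc}), while $e^{-\varphi_j}\nearrow e^{-u}$, so $\int_\Omega e^{-\varphi_j}\,d\lambda_{2n}\to\int_\Omega e^{-u}\,d\lambda_{2n}$ by monotone convergence; hence it is enough to prove the inequality for $u\in\Eo$ and then let $j\to\infty$. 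For such $u$ one has $-u\ge0$ bounded, and from $e^{-u(z)}=1+\int_0^{-u(z)}e^{t}\,dt$ and Tonelli's theorem,
\[
\int_\Omega e^{-u}\,d\lambda_{2n}=\lambda_{2n}(\Omega)+\int_0^\infty e^{t}\,\lambda_{2n}\bigl(\{u<-t\}\bigr)\,dt .
\]

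Everything thus comes down to a sufficiently strong decay estimate for $\lambda_{2n}(\{u<-t\})$, and here I would combine two ingredients. The first is the standard comparison bound for the relative Monge--Amp\`ere capacity, together with Chebyshev's inequality: for every $t>0$,
\[
\operatorname{cap}\bigl(\{u<-t\},\Omega\bigr)\le t^{-n}\int_{\{u\le-t\}}\ddcn{u}\le t^{-n-p}\int_{\{u\le-t\}}(-u)^p\ddcn{u}\le\frac{e_p(u)}{t^{\,n+p}} .
\]
The second is a quantitative Alexander--Taylor type comparison of Lebesgue measure with the capacity (cf.\ \cite{klimek}): there are constants depending only on $n$ with
\[
\lambda_{2n}(E)\le\beta(n)\,\operatorname{diam}(\Omega)^{2n}\exp\!\bigl(-c(n)\,\operatorname{cap}(E,\Omega)^{-1/n}\bigr),\qquad E\subseteq\Omega\ \text{open},
\]
which one gets by comparing $E$ with a concentric ball in a ball containing $\Omega$ and using that the relative capacity decreases when the ambient domain grows. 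Together, with $\gamma:=(n+p)/n>1$, these yield $\lambda_{2n}(\{u<-t\})\le\beta(n)\operatorname{diam}(\Omega)^{2n}\exp\bigl(-c(n)\,t^{\gamma}\,e_p(u)^{-1/n}\bigr)$.

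It then remains to estimate $\int_0^\infty\exp(t-bt^{\gamma})\,dt$ with $b=c(n)\,e_p(u)^{-1/n}$. For a free parameter $0<\epsilon<1$ I would split $t-bt^{\gamma}=\bigl(t-(1-\epsilon)bt^{\gamma}\bigr)-\epsilon bt^{\gamma}$, bound the first bracket by its maximum over $t\ge0$ — which, since $\gamma-1=p/n$, is a constant times $e_p(u)^{1/p}$ — and retain $e^{-\epsilon bt^{\gamma}}$ to keep the residual integral convergent, evaluating it by a Gamma-function substitution. Inserting this back, using $\lambda_{2n}(\Omega)\le\tfrac{\pi^n}{n!}\operatorname{diam}(\Omega)^{2n}\le\pi^n\operatorname{diam}(\Omega)^{2n}$, and doing the elementary bookkeeping with the logarithm needed to absorb the (polynomial in $e_p(u)$) prefactor, one is led to an inequality of the shape
\[
\log\int_\Omega e^{-u}\,d\lambda_{2n}\le\log\!\Bigl(\bigl(\pi^n+\beta(n)\tfrac{\epsilon n}{(n-\epsilon n)^n}\bigr)\operatorname{diam}(\Omega)^{2n}\Bigr)+(2\epsilon n)^{-n/p}\,e_p(u)^{1/p},
\]
the constant $\beta(n)$ collecting the dimensional constants from the two estimates above; the precise splitting and numerology is what makes the bounds come out exactly as in the statement.

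I expect the Alexander--Taylor step to be the main obstacle: passing from the capacity bound to super-exponential decay of the distribution function with constants that depend only on $n$ is the genuine potential-theoretic input, and obtaining the precise numerical form of $A(p,n,\Omega)$ and $B(p,n,\Omega)$ then hinges on doing that comparison, and the one-variable optimization, carefully. By contrast, the reduction to bounded $u$ and the capacity--Chebyshev bound are routine; the two points there that require some attention are the energy-convergence statement used in the reduction and the precise form of the one-sided comparison inequality for the relative capacity.
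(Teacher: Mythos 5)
Your argument is correct in outline but takes a genuinely different route from the paper, and the difference matters for the explicit constants. The paper does not work with the distribution function of $u$ directly: given $u\in\Eo(\Omega)\cap C(\bar\Omega)$ it solves $\ddcn{w}=(-u)^p\ddcn{u}$ for $w\in\Eo$ (via \cite{cegrell_gdm}), proves the pointwise envelope bound $u\geq t^{-\frac pn}w-t$ by the comparison principle, and then chooses $t=\bigl(e_p(u)/(\epsilon^n(2n)^n)\bigr)^{1/p}$ so that $\int_\Omega\ddck{(t^{-p/n}w)}{n}=(2\epsilon n)^n$; at that point Corollary~5.2 of \cite{ACKPZ} (an exponential estimate for functions with small total Monge--Amp\`ere mass) delivers the stated $A$ and $B$ immediately. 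What you propose --- Chebyshev on the energy to bound $\operatorname{cap}(\{u<-t\},\Omega)$ by $e_p(u)t^{-(n+p)}$, the Alexander--Taylor comparison $\lambda_{2n}(E)\leq\beta(n)\operatorname{diam}(\Omega)^{2n}\exp(-c(n)\operatorname{cap}(E,\Omega)^{-1/n})$, and a Legendre-type optimization of $t-bt^{(n+p)/n}$ --- is essentially a from-scratch, weighted re-derivation of that cited corollary, and it does prove the qualitative inequality $\log\int_\Omega e^{-u}\,d\lambda_{2n}\leq A'+B'e_p(u)^{1/p}$ with constants of the right functional form (your reduction to $\Eo$, the layer-cake identity, and the absorption of the polynomial prefactor into the exponential are all fine, modulo the half-level shift $\operatorname{cap}(\{u<-2t\})\leq t^{-n}\int_{\{u<-t\}}\ddcn{u}$ in the capacity estimate, which you correctly flag and which only costs a factor $2^{n+p}$). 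The one real shortfall is the claim that the ``precise splitting and numerology'' will reproduce $B(p,n,\Omega)=(2\epsilon n)^{-n/p}$: your $B'$ comes out as $\frac{p}{n+p}\bigl(\frac{n+p}{n}(1-\epsilon')c(n)\bigr)^{-n/p}$ for whatever constant $c(n)$ your Alexander--Taylor step yields, and without the paper's exact reduction (which converts the $p$-energy into total mass with no loss) there is no reason these coincide. So your proof establishes the theorem's first assertion but not, as written, the specific admissible pair $(A,B)$; the auxiliary potential $w$ and the bound $u\geq t^{-p/n}w-t$ are the ideas you would need to add to get those.
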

\begin{proof} First assume that $u\in \Eo(\Omega)\cap C(\bar{\Omega})$. Next, thanks to~\cite{cegrell_gdm} we can find a uniquely determined function $w\in \Eo$ that
satisfies
\[
(dd^cw)^n=(-u)^p(dd^cu)^n\, .
\]
We shall now prove that
\begin{equation}\label{t11_1}
u\geq t^{-\frac pn}w-t\qquad \text{for all } t>0\, .
\end{equation}
First notice that on the set $\{z\in \Omega: u(z)\geq -t\}$ we have that
\[
u(z)\geq -t\geq t^{-\frac pn}w-t\, .
\]
Next, since  $u\in \Eo$ and $\lim_{z\to \partial \Omega}u(z)=0$ we have that the open set $\omega=\{z\in \Omega: u(z)< -t\}$ is relatively compact in $\Omega$ and
therefore
\[
t^p(dd^cu)^n\leq (-u)^p(dd^cu)^n=(dd^cw)^n\, .
\]
Hence,
\[
(dd^cu)^n\leq t^{-p}(dd^cw)^n=\Big(dd^c(t^{-\frac pn}w-t)\Big)^n\, ,
\]
and furthermore
\[
\liminf_{\omega\ni z\to \partial \omega}(u(z)-t^{-\frac pn}w(z)+t)\geq 0\, .
\]
Therefore, by the comparison principle (see~\cite{cegrell_pc}) we get that $u\geq t^{-\frac pn}w-t$ on $\omega$ and~(\ref{t11_1}) is valid.

Fix $0<\epsilon<1$ and choose $t$ such that
\[
t=\left(\frac {e_p(u)}{\epsilon^n(2n)^n}\right)^{\frac 1p}\, .
\]
With this choice of $t$ we have
\[
\int_{\Omega}(dd^ct^{-\frac pn}w)^n=t^{-p}\int_{\Omega}(-u)^p(dd^cu)^n=t^{-p}e_p(u)=\epsilon^n (2n)^n\, .
\]
By using Corollary~5.2 in~\cite{ACKPZ} for the function $t^{-\frac pn}w$ we get
\begin{multline*}
\log \int_{\Omega}e^{-u}d\lambda_{2n}\leq \log \int_{\Omega}e^{-t^{-\frac pn}w}e^{t}d\lambda_{2n}\leq \\
\leq \log \left(\left(\pi^n+\beta(n)\frac {\epsilon n}{(n-\epsilon n)^n}\right)\operatorname{diam}(\Omega)^{2n}\right)+ \frac {e_p(u)^{\frac 1p}}{(2\epsilon n)^{\frac np}}\, .
\end{multline*}

By a standard procedure we can now remove the assumption that $u\in \Eo(\Omega)\cap C(\bar{\Omega})$, since for arbitrary $u\in \Ep$ there exists a sequence $u_j\in\Eo(\Omega)\cap C(\bar{\Omega})$ such that $u_j\searrow u$ and $e_p(u_j)\to e_p(u)$, $j\to \infty$ (see e.g.~\cite{CKZ}).
\end{proof}

Next in Corollary~\ref{t11_cor} we obtain Theorem~\ref{t11} for functions from the class $\mathcal E_{\chi}$. Let us first recall the definition of $\mathcal E_{\chi}$ (see e.g.~\cite{bgz,Guedj_Zer} for further information).

\begin{definition}\label{def_Echi} Let $\Omega$ be a bounded hyperconvex domain in $\mathbb C^n$, $n\geq 2$. Let $\chi:(-\infty,0]\to (-\infty,0]$ be a continuous and
nondecreasing function. Furthermore, let $\Ec$ contain those plurisubharmonic functions
$u$ for which there exists a decreasing sequence $u_{j}\in\Eo$ that converges pointwise to $u$ on
$\Omega$, as $j$ tends to $\infty$, and
\[
e_{\chi}(u)=\sup_{j}\int_{\Omega} -\chi(u_j)\ddcn{u_j}< \infty\, .
\]
\end{definition}

For example, with this notation if $\chi=-(-t)^p$, then $\Ec=\Ep$. It was proved in~\cite{b} and in~\cite{haihiep} that if $\chi:(-\infty,0]\to (-\infty,0]$ is
continuous, and  strictly increasing, then the complex Monge-Amp\`{e}re operator is well defined on $\Ec$. We are now in position to prove  Corollary~\ref{t11_cor}.

\begin{corollary}\label{t11_cor} Let $\chi:(-\infty,0]\to (-\infty,0]$ be a continuous and nondecreasing function, and let $\Omega$ be a bounded hyperconvex domain in $\mathbb C^n$, $n\geq 2$. Then for any fixed  $0<\epsilon<1$ we have for all $u\in \Ec$ that
\[
\log \int_{\Omega}e^{-u}d\lambda_{2n}\leq \log \left(\left(\pi^n+\beta(n)\frac {\epsilon n}{(n-\epsilon n)^n}\right)\operatorname{diam}(\Omega)^{2n}\right)- \chi^{-1}\left(\frac {-e_{\chi}(u)}{(2\epsilon n)^n}\right).
\]
\end{corollary}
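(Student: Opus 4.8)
The plan is to follow the proof of Theorem~\ref{t11} almost verbatim, with $(-u)^p$ replaced by $-\chi(u)$; the only new feature is the bookkeeping with $\chi$ and its inverse. First I would reduce to the case $u\in\Eo(\Omega)\cap C(\bar{\Omega})$, the passage to a general $u\in\Ec$ being postponed to the end. For such a $u$ the function $-\chi(u)$ is continuous, bounded and nonnegative on $\bar{\Omega}$ (as $\chi\le 0$ is continuous and $u$ is bounded), so $-\chi(u)\ddcn{u}$ is a positive measure of finite total mass and, exactly as in~\cite{cegrell_gdm}, there is a unique $w\in\Eo$ with $\ddcn{w}=-\chi(u)\ddcn{u}$; in particular $\int_\Omega\ddcn{w}=e_\chi(u)$.

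The next step is the comparison estimate: for every $t>0$ with $\chi(-t)<0$,
\[
u\geq \bigl(-\chi(-t)\bigr)^{-1/n}w-t\qquad\text{on }\Omega\, .
\]
On $\{u\ge -t\}$ this is immediate from $w\le 0$. On the relatively compact open set $\omega=\{u<-t\}$, monotonicity of $\chi$ gives $-\chi(u)\ge -\chi(-t)>0$, hence $\ddcn{(-\chi(-t))^{-1/n}w}=(-\chi(-t))^{-1}(-\chi(u))\ddcn{u}\ge\ddcn{u}$ there, while $u=-t$ on $\partial\omega$ together with $w\le 0$ yields $\liminf_{\omega\ni z\to\partial\omega}\bigl(u(z)-(-\chi(-t))^{-1/n}w(z)+t\bigr)\ge 0$; the comparison principle then gives the estimate on $\omega$, hence on $\Omega$.

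From the estimate, $\log\int_\Omega e^{-u}\,d\lambda_{2n}\le t+\log\int_\Omega e^{-(-\chi(-t))^{-1/n}w}\,d\lambda_{2n}$. Since $\int_\Omega\ddcn{(-\chi(-t))^{-1/n}w}=(-\chi(-t))^{-1}e_\chi(u)$, choosing $t$ so that $\chi(-t)\le -e_\chi(u)/(2\epsilon n)^n$ makes this mass at most $(2\epsilon n)^n$, so Corollary~5.2 in~\cite{ACKPZ} bounds the last integral by $\log\bigl(\bigl(\pi^n+\beta(n)\tfrac{\epsilon n}{(n-\epsilon n)^n}\bigr)\operatorname{diam}(\Omega)^{2n}\bigr)$. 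Taking the least admissible $t$, which by monotonicity of $\chi$ is $t=-\chi^{-1}\bigl(-e_\chi(u)/(2\epsilon n)^n\bigr)$, gives the asserted inequality. To treat a general $u\in\Ec$ I would take the defining decreasing sequence $u_j\in\Eo$, so that $\int_\Omega -\chi(u_j)\ddcn{u_j}\le e_\chi(u)$ for each $j$, further approximate each $u_j$ inside $\Eo\cap C(\bar{\Omega})$, apply the estimate just obtained (the right-hand side being nondecreasing in the energy), and let $j\to\infty$ using monotone convergence for $e^{-u_j}\nearrow e^{-u}$.

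The only point requiring care — and the place where I expect the main subtlety to lie — is the meaning of $\chi^{-1}$ when $\chi$ is merely nondecreasing rather than strictly increasing: one reads it as the generalized inverse $\chi^{-1}(s)=\sup\{t\le 0:\chi(t)\le s\}$, for which continuity of $\chi$ gives the equivalence $\chi(-t)\le s\Leftrightarrow t\ge -\chi^{-1}(s)$ used above; if $-e_\chi(u)/(2\epsilon n)^n$ lies below the range of $\chi$ then no admissible $t$ exists, $\chi^{-1}$ of it is $-\infty$ by convention, and the inequality holds trivially (the case $\chi\equiv 0$, where both sides are degenerate, is excluded tacitly). Everything else is a direct transcription of the argument for Theorem~\ref{t11}.
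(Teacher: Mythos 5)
Your proposal is correct and follows exactly the route the paper intends: the paper's own proof of this corollary is simply the remark that the argument of Theorem~\ref{t11} goes through verbatim with $(-u)^p$ replaced by $-\chi(u)$ (citing~\cite{haihiep} for the existence of $w$ and~\cite{b} for the approximation step), which is precisely the adaptation you carry out. Your additional care about the generalized inverse $\chi^{-1}$ for a merely nondecreasing $\chi$ is a sensible clarification that the paper leaves implicit.
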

\begin{proof} The proof of Theorem~\ref{t11} works here as well with some changes of references. The reference~\cite{cegrell_gdm} should be replaced with~\cite{haihiep}, and~\cite{CKZ} should be replaced with~\cite{b}.
\end{proof}

We shall end this section with a remark about the case when the underlying space is a compact K\"ahler manifold. Let us first recall some facts. Let $(X,\omega)$ be a K\"ahler manifold of dimension $n$ with a K\"ahler form $\omega$ such that $\int_{X}\omega^n=1$. We say that $u\in \Ep(X,\omega) (=\Ep)$ if there exists a sequence $u_j\in \mathcal {PSH}(X,\omega)\cap L^{\infty}(X)$ such that $u_j\leq 0$, $u_j\searrow u$, $j\to \infty$ and
\[
\sup_{j}\int_X(-u_j)^p(dd^cu_j+\omega)^n<\infty\, .
\]
Here $\mathcal {PSH}(X,\omega)$ denote the set of $\omega$-plurisubharmonic functions. For $u\in \Ep$, set
\[
e_p(u)=\int_X(-u)^p(dd^cu+\omega)^n\, .
\]
In the case $p=1$ , we have the following classical functional defined on $\mathcal E_1$ by
\[
\mathcal E_{\omega}(u)=\frac {1}{(n+1)!}\sum_{k=0}^{n}\int_X(-u)(dd^cu+\omega)^k\wedge \omega^{n-k}\, ,
\]
and we have the following estimation
\[
\mathcal E_{\omega}(u)\leq \frac {1}{n!}\int_X(-u)(dd^cu+\omega)^n=\frac {1}{n!}\, e_1(u)\, .
\]

\begin{remark}\label{kahler} Let $(X,\omega)$ be a compact K\"ahler manifold of dimension $n$ with a K\"ahler form $\omega$ such that $\int_{X}\omega^n=1$. It was proved
in~\cite{BB1} that there exist constants $a,b>0$ such that for all $u\in \mathcal E_1$ and $k>0$ it holds that
\begin{equation}\label{kahler_1}
\log \left (\int_Xe^{-ku}\omega^n\right)\leq ak^{n+1}\mathcal E_{\omega}(u)+b\, .
\end{equation}
Now let $u\in \Ep$, $p>1$. By using H\"older inequality we get that
\[
e_1(u)=\int_X(-u)(dd^cu+\omega)^n\leq \left(\int_X(-u)^p(dd^cu+\omega)^n\right)^{\frac 1p}=e_p(u)^{\frac 1p}\, .
\]
Thus,  $u\in \mathcal E_p$,  and by~(\ref{kahler_1}) we arrive at
\begin{multline}\label{kahler 2}
\log \left (\int_Xe^{-ku}\omega^n\right)\leq ak^{n+1}\mathcal E_{\omega}(u)+b\leq \frac {ak^{n+1}}{n!}e_1(u)+b\\ \leq \frac {ak^{n+1}}{n!}e_p(u)^{\frac 1p}+b .
\end{multline}
This inequality shall be used on page~\pageref{kahler2}. It should be noted that the case when $0<p<1$ is at this point unknown to the authors.
\end{remark}

\section{Estimates of the constant $B(p,n,\Omega)$}\label{sec_Estimates}

Let us introduce the following notation. For $r>0$ let $\mathbb{B} (z_0,r)=\{z\in\CEP{n}: |z-z_0|< r\}$ be the open ball with center $z_0$ and radius $r$, and to
simplify the notations set  $\mathbb{B}=\mathbb{B} (0,1)$.

Now let $B(p,n,\Omega)$ denotes the optimal constant in the Moser-Trudinger inequality (\ref{intr_MS}), i.e. the infimum of all admissible constants. The aim of this section is to estimate the constant $B(p,n,\Omega)$ for arbitrary hyperconvex domains, inequality (\ref{1.2}), and also in the special case when $\Omega=\B$, inequality (\ref{1.3}).
We shall arrive to the following estimates.

\begin{theorem}\label{Best} Let $\Omega$ be a bounded hyperconvex domain in $\mathbb C^n$, and $B(p,n,\Omega)$ the constant in Theorem~\ref{t11}. Then we have that
\[
\frac {p}{(4\pi)^{\frac np}(n+p)^{1+\frac np}}\leq B(p,n,\Omega)\, .
\]

\end{theorem}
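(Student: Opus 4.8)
The plan is to establish the lower bound on the optimal constant $B(p,n,\Omega)$ by testing the Moser--Trudinger inequality~(\ref{intr_MS}) against an explicit, well-chosen family of plurisubharmonic functions and analyzing the resulting inequality asymptotically. Since $B(p,n,\Omega)$ is the infimum of admissible constants, any concrete $u\in\Ep$ gives
\[
\log\int_{\Omega}e^{-u}\,d\lambda_{2n}-A(p,n,\Omega)\leq B(p,n,\Omega)\,e_p(u)^{1/p}\, ,
\]
so it suffices to produce a family $u_s$ for which the left-hand side, divided by $e_p(u_s)^{1/p}$, approaches $p/((4\pi)^{n/p}(n+p)^{1+n/p})$ from below (or at least stays bounded below by it in the limit). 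The natural candidates are the radial functions $u_s(z)=s\bigl(\max(\log|z|,-1)\bigr)$ or, better, $u_s(z)=s\,g(z)$ where $g$ is essentially the fundamental solution $\log|z|$ truncated so as to lie in $\Eo(\B)\subset\Eo(\Omega)$ after rescaling; here $s>0$ is the scaling parameter we send to infinity. For such a function the Monge--Amp\`ere mass $\ddcn{u_s}$ is a point mass (times $(2\pi)^n$, with our normalization of $d^c$) concentrated at the origin, which makes both $e_p(u_s)$ and $\int e^{-u_s}$ computable in closed form.

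The key steps, in order: (i) Fix the model domain to be a ball $\B(0,R)\subset\Omega$ (translating so the center is an interior point of $\Omega$); by monotonicity of all the quantities under shrinking the domain it is enough to work on this ball, and diameters only help the bound. (ii) Take $u_s(z)=s\log(|z|/R)$ for $|z|<R$, which belongs to $\Eo(\B(0,R))$, and compute $\ddcn{u_s}=s^n(2\pi)^n\delta_0$ in the standard normalization, hence $e_p(u_s)=\int(-u_s)^p\ddcn{u_s}$; since $-u_s\to+\infty$ at $0$ this naive computation needs the standard regularization $u_s^{(\delta)}=s\max(\log(|z|/R),\log\delta)$, for which $e_p$ is a genuine convergent integral over the annulus, and one lets $\delta\to0$ after the asymptotics in $s$. (iii) Estimate $\int_{\B(0,R)}e^{-u_s}\,d\lambda_{2n}=\int_{\B(0,R)}(R/|z|)^s\,d\lambda_{2n}$, which in polar coordinates is $(\text{vol of unit sphere in }\C^n)\cdot R^{2n}\cdot\frac{1}{2n-s}$-type expression — this \emph{diverges} as $s\uparrow 2n$, so the correct regime is to let $s\to 2n$ rather than $s\to\infty$, and the log of this integral blows up like $-\log(2n-s)$. (iv) Match the blow-up rate: $e_p(u_s^{(\delta)})^{1/p}$ grows like $s\cdot(\log(1/\delta))^{?}$ — one must choose $\delta=\delta(s)\to0$ at exactly the right rate so that $e_p(u_s^{(\delta)})^{1/p}$ and $-\log(2n-s)$ are comparable, then optimize the ratio. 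Carrying out the optimization of $\bigl(-\log(2n-s)-A\bigr)\big/e_p^{1/p}$ over the two parameters should pin down the constant $p/((4\pi)^{n/p}(n+p)^{1+n/p})$; the factor $4\pi$ comes from the $(2\pi)^n$ in the Monge--Amp\`ere mass combined with the $2n$ threshold, and the $(n+p)^{1+n/p}$ from the Gamma-function / Beta-integral that appears when integrating $(\log(1/|z|))^p$ against $r^{2n-1}\,dr$.

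The main obstacle I expect is step (iv): getting the extremal family and the coupling $\delta=\delta(s)$ \emph{precisely} right so that the constant comes out as the clean expression stated, rather than merely some constant of the same order. Concretely, one wants a single-parameter family $u_s$ (possibly with a sharper profile than $\log|z|$, e.g. incorporating a $(\log|z|)$-power or a Legendre-type transform tuned to the exponent $p$) whose Monge--Amp\`ere measure is still a point mass so that $e_p$ is exactly a one-dimensional integral $\int_0^1(-\log r)^p\,d(\mu_s)$, and then to recognize that the ratio $\bigl[\log\int e^{-u_s}\bigr]/e_p(u_s)^{1/p}$ has infimum exactly the claimed value. A convenient reformulation: by a change of variables $r=e^{-t}$, $t\in(0,\infty)$, the problem reduces to a one-dimensional Moser-type inequality $\log\int_0^\infty e^{2nt-\phi(t)}\,dt\leq \text{const}+B\bigl(\int_0^\infty(\phi'(t))^{\text{(something)}}\,\phi^{p}\cdots\bigr)^{1/p}$ for convex increasing $\phi$ with $\phi(0)=0$, where testing against linear $\phi(t)=st$ already yields the bound; verifying that linear profiles are asymptotically extremal (or at least that they realize the stated constant) is then the crux. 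I would also need to double-check the normalization constant relating $(dd^c\log|z|)^n$ to $\delta_0$ in the convention used by the paper (the factor that ultimately produces $4\pi$ rather than $2\pi$ or $\pi$), since the precision of the final fraction depends entirely on getting that right.
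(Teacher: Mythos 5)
Your overall strategy --- test the inequality against truncated logarithmic profiles, compute $e_p$ and $\int e^{-u}$ explicitly, and optimize the ratio --- is exactly the paper's, and your identification of where the factor $4\pi$ comes from is correct. But step (iii) commits to the wrong asymptotic regime, and this is a genuine gap: the limit $s\uparrow 2n$ cannot produce a positive lower bound. With $u_s^{(\delta)}=s\max(\log(|z|/R),\log\delta)$ one has $e_p(u_s^{(\delta)})^{1/p}=(2\pi)^{n/p}s^{1+n/p}(-\log\delta)$, while $\int_{R\delta<|z|<R}(R/|z|)^s\,d\lambda_{2n}$ is comparable to $\frac{1-\delta^{2n-s}}{2n-s}$. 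For the truncation not to cut off the near-divergence you need $(2n-s)(-\log\delta)\gtrsim 1$, i.e.\ $-\log\delta\gtrsim (2n-s)^{-1}$; then $\log\int e^{-u}\approx\log\frac{1}{2n-s}$ while $e_p^{1/p}\gtrsim (2n-s)^{-1}$, so the ratio tends to $0$, not to the claimed constant. The correct regime is the opposite one: fix the slope at a \emph{supercritical} value $s>2n$ and send the truncation level to $-\infty$. Writing $\beta=\log\delta\to-\infty$, the dominant contribution to $\int e^{-u}$ is then the plateau term $e^{-s\beta}\lambda_{2n}(\{|z|<Re^{\beta}\})\asymp e^{(2n-s)\beta}$, so $\log\int e^{-u}\sim (s-2n)(-\beta)$ and $e_p^{1/p}=(2\pi)^{n/p}s^{1+n/p}(-\beta)$ both grow \emph{linearly} in $-\beta$; the ratio converges to $\frac{s-2n}{(2\pi)^{n/p}s^{1+n/p}}$, which is maximized exactly at $s=2n+2p$ and there equals $\frac{p}{(4\pi)^{n/p}(n+p)^{1+n/p}}$. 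This is precisely what the paper does, taking $u=(2n+2p)\max(g_{\Omega}(\cdot,0),\beta)$ and letting $\beta\to-\infty$.

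A secondary point: for a general hyperconvex $\Omega$ you cannot simply ``work on a ball $\B(0,R)\subset\Omega$,'' because the test function must lie in $\Ep(\Omega)$, and a member of $\Eo(\B(0,R))$ does not extend to one of $\Eo(\Omega)$. The paper resolves this by using the pluricomplex Green function $g_\Omega(\cdot,0)$, which satisfies $\log|z|-C\le g_\Omega(z,0)\le\log|z|+C$ near the pole; these two-sided bounds are enough to show that the additional contributions to $\int_\Omega e^{-u}$ away from the pole stay bounded as $\beta\to-\infty$ and hence do not affect the limiting ratio. With the regime corrected and the Green function substituted for $\log(|z|/R)$, your computation scheme does deliver the stated bound.
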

\begin{proof}
Without loss of generality we can assume that $0\in \Omega$. Let $g_{\Omega}(z,0)$ be the pluricomplex Green function with pole at $0$, and for a parameter $\beta\leq 0$ let us define
\[
u(z)=(2n+2p)\max\big(g_{\Omega}(z,0),\beta\big)\, .
\]
This construction yields that
\[
e_p(u)=\int_{\Omega}(-u)^p(dd^cu)^n=(2\pi)^n(2n+2p)^{p+n}(-\beta)^p\, ,
\]
and then we shall proceed by estimating the integral
\begin{equation}\label{eq1}
\int_{\Omega}e^{-u}d\lambda_{2n}\, .
\end{equation}
From the definition of the pluricomplex Green function it follows that there exist a radius $r>0$, and a constant $C>0$, such that $\B(0,r)\Subset\Omega$ and such that
the following inequalities hold for all $z\in \B(0,r)$:
\begin{equation}\label{eq2}
\log|z|-C\leq g_{\Omega}(z,0)\leq \log |z|+C\, .
\end{equation}
Choose then $\beta\leq \beta_1\leq 0$ such that it holds $\{z\in \Omega:g_{\Omega}(z,0)<\beta_1\}\subset \B(0,r)$. From now on we shall only consider those $\beta$ with
$\beta\leq \beta_1$. From~(\ref{eq2}) we now have that
\[
\B\left(0,e^{\beta-C}\right)\subset \{z\in \Omega:g_{\Omega}(z,0)<\beta\}\subset \B\left(0,e^{\beta+C}\right)\, .
\]
We start by dividing~(\ref{eq1}) as
\begin{multline*}
\int_{\Omega}e^{-u}d\lambda_{2n}=\int_{\{z\in \Omega:g_{\Omega}(z,0)<\beta\}}e^{-(2n+2p)\beta}d\lambda_{2n}\\+\int_{\{z\in \Omega:g_{\Omega}(z,0)\geq \beta\}}e^{-(2n+2p) g_{\Omega}(z,0)}d\lambda_{2n}=I_1+I_2\ ,
\end{multline*}
and notice that by~(\ref{eq2}) we have
\begin{multline}\label{eq3}
\frac {\pi^n}{n!}e^{-2nC}e^{-2p\beta}=e^{-(2n+2p)\beta}\lambda_{2n}(\B(0,e^{\beta-C}))\leq I_1\\ \leq e^{-(2n+2p)\beta}\lambda_{2n}(\B(0,e^{\beta+C})=\frac {\pi^n}{n!}e^{2nC}e^{-2p\beta}\, .
\end{multline}
Furthermore,
\begin{multline*}
I_2=\int_{\Omega\setminus \mathbb{B}(0,r)}e^{-(2n+2p)g_{\Omega}(z,0)}d\lambda_{2n}+\int_{\mathbb{B}(0,r)\cap \{z\in \Omega:g_{\Omega}(z,0)\geq \beta\}}e^{-(2n+2p)g_{\Omega}(z,0)}d\lambda_{2n}\\=I_3+I_4\, .
\end{multline*}
For $z\in \Omega\setminus \B(0,r)$ we have that
\[
1\leq e^{-(2n+2p)g_{\Omega}(z,0)}\leq e^{-(2n+2p)\beta_1}
\]
and therefore
\begin{equation}\label{eq4}
\lambda_{2n}(\Omega\setminus \B(0,r))\leq I_3\leq e^{-(2n+2p)\beta_1}\lambda_{2n}(\Omega\setminus \B(0,r))\, .
\end{equation}
We also get the estimate of $I_4$ as
\begin{multline}\label{eq5}
0\leq I_4\leq \int_{\mathbb{B}(0,r)\setminus \mathbb{B}(0,e^{\beta-C})}e^{-(2n+2p)(\log|z|-C)}d\lambda_{2n}\\=e^{(2n+2p)C}\frac {2\pi^n}{(n-1)!}\int_{e^{\beta-C}}^r t^{-1-2p}dt
=\frac {2\pi^ne^{(2n+2p)C}}{(n-1)!(-2p)}\left(r^{-2p}-e^{(\beta-C)(-2p)}\right)\, .
\end{multline}
From (\ref{eq1}), (\ref{eq3}), (\ref{eq4}) and (\ref{eq5}) it follows that there exist constant $c_1, c_2, c_3, c_4$ not depending on $\beta$ such that
\[
c_1e^{-2p\beta}+c_2\leq \int_{\Omega}e^{-u}d\lambda_{2n}\leq c_3e^{-2p\beta}+c_4\, ,
\]
and therefore
\[
\lim_{\beta\to -\infty}\,\frac {\log \left(\displaystyle{\int_{\Omega}e^{-u}d\lambda_{2n}}\right)}{e_p(u)^{\frac 1p}}=\frac {p}{(4\pi)^{\frac np}(n+p)^{1+\frac np}}\, .
\]
Thus,
\[
B(p,n,\Omega)\geq \frac {p}{(4\pi)^{\frac np}(n+p)^{1+\frac np}}\, .
\]
\end{proof}

To prove the inequality (\ref{1.3}) we shall make use of radially symmetric plurisubharmonic functions. Let us recall some basic facts here, and we refer the reader
to~\cite{AC1,monn} and the references therein for further information. Recall that a function $u:\mathbb{B}\to [-\infty,\infty)$ is said to be \emph{radially symmetric}
if we have that
\[
u(z)=u(|z|)\qquad \text{ for all } z\in\mathbb{B}\, .
\]
For each radially symmetric function $u:\mathbb{B}\to [-\infty, \infty)$ we define the function $\tilde u:[0,1)\to [-\infty, \infty)$ by
\begin{equation}\label{prel_def_rad}
\tilde u(t)=u(|z|)\, , \text{ where } t=|z|\, .
\end{equation}
On the other hand, to every function $\tilde v:[0,1)\to [-\infty, \infty)$ we can construct a radially symmetric function $v$
through~(\ref{prel_def_rad}). Furthermore, $u$ is a radially symmetric plurisubharmonic function if and only if $u(t)$ is an increasing function, and it is convex with respect to $\log t$.

\bigskip

Let us first show a few elementary lemmas.

\begin{lemma}\label{l1}
For any $\alpha >1$, and any $A>0$, there exists a constant $B$ such that for all $t\geq 0$ it holds
\[
At^{\alpha}+B\geq t .
\]
In fact one can take
\[
B=\frac {\alpha-1}{\alpha}(\alpha A)^{\frac {1}{1-\alpha}}\, .
\]
\end{lemma}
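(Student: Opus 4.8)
The plan is to recognize this as a simple one-variable optimization problem: for $\alpha>1$ and $A>0$ fixed, consider the function $f(t)=At^{\alpha}+B-t$ on $[0,\infty)$ and determine the smallest $B$ making $f\geq 0$ everywhere. Equivalently, I would set $g(t)=At^{\alpha}-t$ and compute $B=-\min_{t\geq 0}g(t)$, which forces $f(t)=g(t)+B\geq 0$ and is sharp.

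First I would note that $g(0)=0$ and $g(t)\to+\infty$ as $t\to\infty$ since $\alpha>1$, so $g$ attains a finite minimum at an interior critical point. Differentiating, $g'(t)=\alpha A t^{\alpha-1}-1=0$ gives the unique critical point $t_{*}=(\alpha A)^{1/(1-\alpha)}$ (note the exponent $\tfrac{1}{1-\alpha}$ is negative, so $t_{*}>0$ is well-defined), and since $g''(t_*)>0$ this is the global minimum. Then I would substitute back: $g(t_*)=A t_*^{\alpha}-t_* = A(\alpha A)^{\alpha/(1-\alpha)}-(\alpha A)^{1/(1-\alpha)}$. Factoring out $(\alpha A)^{1/(1-\alpha)}$ and simplifying the remaining power of $\alpha A$ (one checks $A(\alpha A)^{\alpha/(1-\alpha)} = (\alpha A)^{1/(1-\alpha)}\cdot \tfrac1\alpha$, using $A=(\alpha A)\cdot\alpha^{-1}$ inside the exponent bookkeeping) yields $g(t_*)=(\alpha A)^{1/(1-\alpha)}\big(\tfrac1\alpha-1\big)=-\tfrac{\alpha-1}{\alpha}(\alpha A)^{1/(1-\alpha)}$. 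Hence $B=-g(t_*)=\tfrac{\alpha-1}{\alpha}(\alpha A)^{1/(1-\alpha)}$, exactly as claimed, and with this value $At^{\alpha}+B\geq t$ for all $t\geq 0$.

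There is essentially no obstacle here; the only mild care needed is the exponent algebra — tracking that $\tfrac{1}{1-\alpha}<0$ so $t_*$ and $B$ are positive and finite, and correctly combining $A$ with the power of $\alpha A$ when evaluating $g(t_*)$. Alternatively, one could avoid calculus entirely by invoking Young's inequality in the form $st\leq \tfrac{s^{\alpha}}{\alpha}+\tfrac{t^{\alpha'}}{\alpha'}$ with conjugate exponent $\alpha'=\alpha/(\alpha-1)$, applied to $s=(\alpha A)^{1/\alpha}t$ and the constant $1$, which reproduces the same $B$; but the direct minimization is the cleanest route to the stated sharp constant.
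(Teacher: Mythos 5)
Your proposal is correct and is essentially identical to the paper's proof: both minimize $f(t)=At^{\alpha}-t$ over $[0,\infty)$, locate the critical point $t_{*}=(\alpha A)^{1/(1-\alpha)}$, and evaluate the minimum to obtain $B=\frac{\alpha-1}{\alpha}(\alpha A)^{1/(1-\alpha)}$. The extra remarks on sign bookkeeping and the Young's-inequality alternative are fine but not needed.
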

\begin{proof}
It is is enough to observe that the function
\[
f(t)=At^{\alpha}-t
\]
attains its minimum at
\[
t_0=\left(\alpha A\right)^{\frac {1}{1-\alpha}}\, ,
\]
and that
\[
\min_{[0,\infty)} f=\frac {1-\alpha}{\alpha}(\alpha A)^{\frac {1}{1-\alpha}}=-B\, .
\]
\end{proof}

In Lemma~\ref{ep lemma} we shall make use of the following equality. For $f\in L^{p}(X,\mu)$ we have
\begin{equation}\label{ep eq_1}
\int_X|f|^p\,d\mu=p\int_{0}^{\infty}t^{p-1}\mu(\{x\in X: |f(x)|\geq t\})\,dt\, .
\end{equation}

\begin{lemma}\label{ep lemma}
Let $p>0$, and let $u(z)=u(|z|)=\tilde u(t)$ be a radially symmetric plurisubharmonic function such that $\lim_{z\to \partial \mathbb{B}}u(z)=0$ and $u\in \mathcal E_p$, then we have
\begin{equation}\label{ep eq}
e_p(u)=(2\pi)^np\int_0^1(-\tilde u(t))^{p-1}\tilde u'(t)^{n+1}t^ndt\, .
\end{equation}
\end{lemma}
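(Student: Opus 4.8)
The plan is to compute the complex Monge–Ampère mass $(dd^c u)^n$ explicitly for a radially symmetric plurisubharmonic function and then use the layer-cake formula~(\ref{ep eq_1}) to turn the integral $\int_\Omega (-u)^p (dd^c u)^n$ into a one-dimensional integral in the radial variable. First I would recall the classical formula for $(dd^c u)^n$ of a radially symmetric $u$: writing $u(z) = \tilde u(|z|)$ and introducing $s = \log t$ so that $\phi(s) = \tilde u(e^s)$ is convex and increasing, one has
\[
(dd^c u)^n = c_n\, \Delta_s(\phi)\, (\phi')^{n-1}\, \frac{d\lambda_{2n}}{|z|^{2n}}
\]
in a suitable distributional sense, for an explicit dimensional constant $c_n$; equivalently, on the region where $\tilde u$ is $C^2$,
\[
(dd^c u)^n = \frac{1}{4^n}\binom{2n}{?}\cdots
\]
— the cleanest route is to use the known fact (see~\cite{AC1,monn}) that for radial $u$,
\[
\int_{\mathbb{B}} (dd^c u)^n = (2\pi)^n \big(\lim_{t\to 1^-}\tilde u'(t)\,t\big)^n
\]
wait, more precisely that the Monge–Ampère measure pushed forward to $[0,1)$ is $(2\pi)^n\, d\big((t\tilde u'(t))^n\big)/n$ type expression; I would instead work directly with the density.

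The key computation: for $u$ radial and smooth with $v(s) := \tilde u(e^s)$ convex increasing, a standard calculation (which I would cite from~\cite{AC1} or~\cite{monn} rather than redo) gives
\[
(dd^c u)^n = n^{-n}(2\pi)^{?}\dots
\]
Let me state it the way I would actually use: the key step is the identity
\[
\int_{\mathbb{B}} (-u)^p (dd^c u)^n = (2\pi)^n p \int_0^1 (-\tilde u(t))^{p-1}\, \tilde u'(t)^{n+1}\, t^n\, dt,
\]
which I would derive by first establishing that for radial $u$ with $\lim_{z\to\partial\mathbb B}u=0$ one has, with $H(t) := \tilde u'(t)\, t$ (a nonnegative nondecreasing function of $\log t$ since $u$ is psh),
\[
(dd^c u)^n \text{ corresponds, under radial integration, to the measure } (2\pi)^n\, d\big(H(t)^n/?\big),
\]
and then integrating by parts. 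Concretely: use~(\ref{ep eq_1}) with $f = -u$, $X = \mathbb B$, $\mu = (dd^c u)^n$, to write
\[
\int_{\mathbb B}(-u)^p (dd^c u)^n = p\int_0^\infty \tau^{p-1}\, \mu\big(\{-u \ge \tau\}\big)\, d\tau.
\]
Since $u$ is radial and increasing in $t$, the sublevel set $\{-u\ge\tau\} = \{-\tilde u(t)\ge \tau\}$ is a ball $\mathbb B(0, t_\tau)$ where $\tilde u(t_\tau) = -\tau$, and by the radial Monge–Ampère formula $\mu(\mathbb B(0,\rho)) = (2\pi)^n (\tilde u'(\rho)\rho)^n =: (2\pi)^n H(\rho)^n$ (valid for $u\in\Eo$ radial — this is the elementary fact I would invoke). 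Substituting $\tau = -\tilde u(t_\tau)$, i.e. changing variables from $\tau$ to $t$ via $d\tau = -\tilde u'(t)\, dt$, converts the integral into
\[
p\int_0^1 (-\tilde u(t))^{p-1}\,(2\pi)^n H(t)^n\, \tilde u'(t)\, dt = (2\pi)^n p \int_0^1 (-\tilde u(t))^{p-1}\, \tilde u'(t)^{n+1}\, t^n\, dt,
\]
which is~(\ref{ep eq}).

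The main obstacle is twofold. First, regularity: $\tilde u$ need only be convex in $\log t$, hence only differentiable a.e. and $H(t) = t\tilde u'(t)$ is merely nondecreasing, so "$\tilde u'$" and the change of variables must be handled with care — I would first prove the formula for $u\in\Eo(\mathbb B)\cap C(\bar{\mathbb B})$ radial and smooth on $(0,1)$, where the pushforward $\mu = (2\pi)^n\, d(H^n)$ is an honest identity of measures, and then pass to general $u\in\mathcal E_p$ radial by approximating with a decreasing sequence $u_j$ of smooth radial functions in $\Eo$ with $e_p(u_j)\to e_p(u)$, using monotone convergence on the right-hand side (both $(-\tilde u_j)^{p-1}$ issues and $\tilde u_j'$ require that the integrand behaves well; since $u_j\searrow u$ the functions $-\tilde u_j$ decrease and $H_j$ converge appropriately). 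Second, I must make sure the boundary term in the integration by parts / change of variables vanishes, which is exactly where the hypothesis $\lim_{z\to\partial\mathbb B}u(z)=0$ enters: it forces $t_\tau \to 1$ as $\tau\to 0$ and kills any boundary contribution at $t=1$, while finiteness of $e_p(u)$ (so $u\in\mathcal E_p$) guarantees the integral at $t=0$ converges. Once these approximation and boundary issues are dispatched, the computation is the routine substitution sketched above.
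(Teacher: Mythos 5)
Your proposal is correct and follows essentially the same route as the paper: both rest on the radial Monge--Amp\`ere mass formula $(dd^cu)^n(B(0,t))=(2\pi)^n t^n\tilde u'(t)^n$, the layer-cake identity~(\ref{ep eq_1}) applied to $\mu=(dd^cu)^n$, and the change of variables $\tau=-\tilde u(t)$. The exploratory false starts in the middle are harmless, and your extra care about regularity (left derivatives of the convex profile, approximation from $\Eo$) is consistent with how the paper handles it via the cited radial formula.
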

\begin{proof} If $u(z)=u(|z|)=\tilde u(t)$ is a radially symmetric plurisubharmonic function such that $\lim_{z\to \partial \mathbb{B}}u(z)=0$, then for $t=|z|$ it holds
that
\[
F(t):=\frac {1}{(2\pi)^n}(dd^cu)^n(B(0,t))=t^n\tilde u'(t)^n\, ,
\]
where $\tilde u'$ is the left derivative of a convex function $\tilde u$ (see \cite{AC1}). For $t\geq 0$ we have that
\[
\{z\in \mathbb{B}:u(z)\leq -t\}=B(0,s), \, \text { where } \, s=\tilde u^{-1}(-t),
\]
where $\tilde u^{-1}(\inf u)=\sup\{x: \tilde u(x)=\inf \tilde u\}$. Therefore, by using~(\ref{ep eq_1}) we arrive at
\begin{multline*}
e_p(u)=\int_{\mathbb{B}}(-u)^p(dd^cu)^n=p\int_0^{-\inf u}t^{p-1}(dd^cu)^n(\{z\in \mathbb{B}: u(z)\leq -t\})\, dt \\
=p(2\pi)^n\int_0^{-\inf \tilde u}t^{p-1}F(\tilde u^{-1}(-t))\, dt
=(2\pi)^np\int_0^1(-\tilde u(s))^{p-1}\tilde u'(s)^{n+1}s^n\,ds\, ,
\end{multline*}
where $\tilde u(s)=t$, and this completes this proof.
\end{proof}

We are now in position to prove the inequality (\ref{1.3}).

\begin{theorem}\label{rsep}
Let $p>0$, and let $u$ be a radially symmetric plurisubharmonic function such that $\lim_{z\to \partial \mathbb{B}}u(z)=0$ and $u\in \mathcal E_p$, then we have that

\[
\log \int_{\mathbb{B}}e^{-u(z)}d\lambda_{2n}\leq d+ \left(\frac {e_p(u)p^{p-1}}{(4\pi)^n(n+1)^{n+1}(n+p)^{p-1}}\right)^{\frac 1p}\, ,
\]
where the constant $d$ does not depend on $u$. Therefore,
\[
B(p,n,\mathbb{B})\leq \left(\frac {p^{p-1}}{(4\pi)^n(n+1)^{n+1}(n+p)^{p-1}}\right)^{\frac 1p}\, .
\]
\end{theorem}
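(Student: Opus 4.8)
The plan is to reduce the problem to a one–dimensional variational inequality for the profile $\tilde u$, then apply a Moser–type inequality on the logarithmic variable. First I would use Lemma~\ref{ep lemma}, which expresses $e_p(u)$ as $(2\pi)^n p\int_0^1(-\tilde u(t))^{p-1}\tilde u'(t)^{n+1}t^n\,dt$, and rewrite the target integral in polar coordinates as
\[
\int_{\mathbb{B}}e^{-u(z)}d\lambda_{2n}=\frac{2\pi^n}{(n-1)!}\int_0^1 e^{-\tilde u(t)}t^{2n-1}\,dt\, .
\]
The natural substitution is $t=e^{-s}$, $s\in[0,\infty)$, turning $\tilde u$ into a function $\psi(s)=-\tilde u(e^{-s})\geq 0$ which is \emph{convex and decreasing} in $s$ (by the recalled characterization of radial plurisubharmonicity: $\tilde u$ increasing and convex in $\log t$); it vanishes as $s\to 0^+$ and satisfies $\psi(\infty)=-\inf u$. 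In these variables the energy becomes, up to the constant $(2\pi)^n p$, an integral of $\psi(s)^{p-1}(\psi'(s))^{n+1}e^{-2ns}\,ds$ (note $\tilde u'(t)\,dt=\psi'(s)\,ds$ with the appropriate sign and one factor $t^n=e^{-ns}$ from $F$, plus another $e^{-ns}$ from $t^n\,dt$-type bookkeeping), and the exponential integral becomes $\int_0^\infty e^{\psi(s)}e^{-2ns}\,ds$.

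Next I would set up the optimization. Writing $E:=e_p(u)$, I want the sharpest constant $K$ such that $\log\!\big(\frac{2\pi^n}{(n-1)!}\int_0^\infty e^{\psi(s)-2ns}\,ds\big)\leq d+K\,E^{1/p}$ for every admissible convex decreasing $\psi$. The standard device is: since $\psi$ is convex with $\psi(0)=0$, for each $s$ we have $\psi(s)\le \psi'(s)\cdot s$ is \emph{not} quite what is needed; rather, the right move is to bound $\psi(s)$ by its ``energy'' via Jensen/Hölder. Concretely, for the radial extremal one expects $\psi$ linear in $s$, i.e. $\psi(s)=cs$, which corresponds to $u(z)=c\log|z|$ truncated; plugging $\psi(s)=cs$ into the energy integral gives a constant times $c^{p-1}\cdot c^{n+1}=c^{n+p}$ (the $e^{-2ns}$ makes all integrals convergent and produces an explicit Gamma-type constant), while $\int e^{cs-2ns}\,ds$ converges precisely when $c<2n$ and blows up like $(2n-c)^{-1}$ as $c\uparrow 2n$. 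This pins the exponent: $\log(\text{integral})\sim -\log(2n-c)$ while $E^{1/p}\sim c^{(n+p)/p}$, and one reads off the asymptotically optimal $K$. Matching the constants carefully — tracking $(2\pi)^n$, $(n-1)!$, the factor $p$ from Lemma~\ref{ep lemma}, and the value $c=2n$ — should yield exactly $\big(p^{p-1}/((4\pi)^n(n+1)^{n+1}(n+p)^{p-1})\big)^{1/p}$.

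For the actual proof (not just the heuristic), I would invoke Lemma~\ref{l1} with $\alpha=\frac{n+p}{p}>1$: after using Hölder's inequality in the $s$-variable to bound $\int_0^\infty e^{\psi(s)-2ns}\,ds$ by an expression of the form $\exp\big(C\,(\text{weighted norm of }\psi')\big)$ and then relating that weighted norm to $E^{1/(n+p)}$ via Lemma~\ref{ep lemma}, one gets $\log\int e^{-u}\le d' + C'\,E^{1/(n+p)}$, which is \emph{not} of the claimed form; so instead the one–dimensional Moser inequality should be applied so that it directly produces a bound linear in $E^{1/p}$. The cleanest route: show $\psi(s)\le \big(\tfrac{1}{2n}+\delta\big)\cdot 2n s$-type bounds fail, and instead use that $e_p$ controls $\int_0^\infty(\psi')^{n+1}\psi^{p-1}e^{-2ns}\,ds$, apply the substitution making $\psi$ the new variable, and arrive at $\int_{\mathbb B}e^{-u}\le d'' + \int_0^{-\inf u}e^{\tau}\,d\nu(\tau)$ where $\nu$ has density controlled by the energy; here $-\inf u = \psi(\infty)$ and one bounds $\psi(\infty)$ by $K E^{1/p} + d$ using Lemma~\ref{l1} applied to the scalar inequality between $\psi(\infty)$ and the energy integral (which, after the change of variables, reads $E = (2\pi)^n\int_0^{\psi(\infty)} (\text{something})\,d\tau \ge \text{const}\cdot\psi(\infty)^{n+p}$ by Hölder/Jensen, with the extremal being linear $\psi$). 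Then $\log\int e^{-u}\le \psi(\infty)+\text{const}$ finishes it, and the explicit constant from Lemma~\ref{l1} with $A$ equal to the reciprocal of that Hölder constant and $\alpha=(n+p)/p$ gives precisely $B(p,n,\mathbb B)\le\big(p^{p-1}/((4\pi)^n(n+1)^{n+1}(n+p)^{p-1})\big)^{1/p}$.

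The main obstacle I expect is the sharp lower bound $E\ge \text{const}\cdot(-\inf u)^{n+p}$ with the \emph{correct} constant $\frac{(4\pi)^n(n+1)^{n+1}}{p^{p-1}(n+p)^{p-1}}$ — equivalently, proving that among radial plurisubharmonic $u$ with prescribed $\inf u$, the energy $e_p(u)=(2\pi)^np\int_0^1(-\tilde u)^{p-1}(\tilde u')^{n+1}t^n\,dt$ is minimized by the truncated logarithm $u(z)=(2n+2p)\max(\log|z|,\beta)/(\text{normalization})$, i.e. by $\psi$ linear in $s=-\log t$. This is a calculus-of-variations / Hölder-with-equality-case computation: after the substitution $t=e^{-s}$ it amounts to a weighted Hölder inequality $\int_0^T (\psi')^{n+1}\psi^{p-1}e^{-2ns}\,ds\cdot(\text{weight integral})^{?}\ge \big(\int_0^T \psi'\,ds\big)^{n+p}=\psi(T)^{n+p}$ with the weight $e^{-2ns}$ chosen so that equality holds for linear $\psi$ and $T=\infty$ (which forces the exponent $c=2n$, explaining the $(4\pi)^n$ and the $(n+1)^{n+1}$ from the Gamma/Beta constants). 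Getting every numerical factor to land exactly — especially reconciling the $(2\pi)^n$ from Lemma~\ref{ep lemma} with the $(4\pi)^n$ in the statement (a factor $2^n$ coming from $t^{2n-1}\,dt$ versus $t^{n}\,dt$, i.e. from $d\lambda_{2n}$ on $\mathbb C^n$ vs. the $(dd^c)^n$ bookkeeping) — is the delicate part; the rest is routine once Lemma~\ref{l1} and Lemma~\ref{ep lemma} are in hand.
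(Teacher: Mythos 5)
Your setup (Lemma~\ref{ep lemma}, polar coordinates, the logarithmic substitution, and the intention to combine a one--dimensional Moser inequality with Lemma~\ref{l1}) points in the right direction, but the concrete argument you commit to in your ``cleanest route'' has a fatal gap. You propose to prove $e_p(u)\geq \mathrm{const}\cdot(-\inf u)^{n+p}$ and then conclude via $\log\int_{\mathbb{B}}e^{-u}\,d\lambda_{2n}\leq -\inf u+\mathrm{const}$. That lower bound on the energy is false: Example~\ref{ex2} of this very paper exhibits radial functions $u_j=j^{-p/(n+p)}\max(\log|z|,-j)$ with $e_p(u_j)=(2\pi)^n$ fixed while $-\inf u_j=j^{n/(n+p)}\to\infty$, so no inequality of the form $-\inf u\leq K e_p(u)^{1/p}+d$ (equivalently $e_p(u)\geq c\,(-\inf u)^{n+p}$) can hold. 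Any proof that passes through an $L^\infty$ bound on $u$ is therefore doomed; the whole point of a Moser--Trudinger statement is that $\int e^{-u}$ is controlled even though $\sup(-u)$ is not. A secondary issue: your change-of-variables bookkeeping is off --- in the variable $s=-\log t$ the energy is $(2\pi)^n p\int_0^\infty\psi^{p-1}(\psi')^{n+1}\,ds$ with \emph{no} weight $e^{-2ns}$ (the factor $t^n$ and the Jacobian cancel against $\tilde u'(t)^{n+1}=(\psi'(s)/t)^{n+1}$), so the weighted H\"older/extremal analysis you build on that weight does not apply as stated.

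The paper's proof keeps Lemma~\ref{l1} pointwise instead of evaluating anything at $s=\infty$. One sets $v=-(-x\tilde u)^{(n+p)/(n+1)}$ with $x$ chosen in terms of $e_p(u)$ so that $\int_0^1(v')^{n+1}s^n\,ds\leq 1$; the classical Moser inequality then gives the uniform bound $\int_0^1 e^{2n(-v(s))^{(n+1)/n}}s^{2n-1}\,ds\leq c/(2n)$, i.e.\ it controls the integral of $e^{2n(-x\tilde u)^{(n+p)/n}}$ even when $-\tilde u$ is unbounded. Lemma~\ref{l1} with $\alpha=(n+p)/n$ and $A=2nx^{(n+p)/n}$ then gives the pointwise splitting $-\tilde u(s)\leq 2n(-x\tilde u(s))^{(n+p)/n}+y$, where $y$ is exactly $\bigl(e_p(u)p^{p-1}/((4\pi)^n(n+1)^{n+1}(n+p)^{p-1})\bigr)^{1/p}$; exponentiating and integrating against $s^{2n-1}\,ds$ finishes the proof. (Incidentally, the factor $2^n$ turning $(2\pi)^n$ into $(4\pi)^n$ comes from the coefficient $2n$ in Moser's exponent through $(\alpha A)^{1/(1-\alpha)}$ in Lemma~\ref{l1}, not from the $t^{2n-1}$ versus $t^n$ discrepancy you mention.) If you replace your $\psi(\infty)$ step by this pointwise use of Lemma~\ref{l1} together with the integrated Moser bound, you recover the paper's argument.
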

\begin{proof}
By Lemma~\ref{ep lemma} we have that the pluricomplex $p$-energy of $u$ is equal to
\begin{multline*}
e_p(u)=(2\pi)^np\int_0^1(-\tilde u(t))^{p-1}\tilde u'(t)^{n+1}t^ndt=\\
=\frac {(2\pi)^np(n+1)^{n+1}}{(n+p)^{n+1}}\int_0^1\left(\left(-(-\tilde u(t))^{\frac {n+p}{n+1}}\right)'\right)^{n+1}t^ndt\, .
\end{multline*}
Therefore, if $v(t)=-(-x\tilde u(t))^{\frac {n+p}{n+1}}$, where
\[
x=\left(\frac {(2\pi)^np(n+1)^{n+1}}{e_p(u)(n+p)^{n+1}}\right)^{\frac {1}{n+p}}\, ,
\]
then $v$ be an increasing function $v:[0,1)\to (-\infty,0)$ such that $\lim_{t\to 1^-}v(t)=0$ and
\[
\int_{0}^1(v'(s))^{n+1}s^nds\leq 1.
\]
Thanks to a slightly modified version of the classical Moser-Trudinger inequality (cf.~\cite{moser}), we arrive at
\begin{equation}\label{moser3}
\int_{0}^1e^{2n(-v(s))^{\frac {n+1}{n}}}s^{2n -1}ds=\int_{0}^1e^{2n(-\tilde u(s)x)^{\frac {n+p}{n}}}s^{2n -1}ds\leq \frac {c}{2n}\, .
\end{equation}
where the constant $c$ does not depend on $u$. Lemma~\ref{l1} yields that
\[
-\tilde u(s)\leq 2n(-\tilde u(s)x)^{\frac {p+n}{n}}+\left(\frac {e_pp^{p-1}}{(4\pi)^n(n+1)^{n+1}(n+p)^{p-1}}\right)^{\frac 1p}= 2n(-\tilde u(s)x)^{\frac {p+n}{n}}+y\, .
\]
Hence by (\ref{moser3}),
\begin{multline*}
\int_{\mathbb{B}}e^{-u(z)}d\lambda_{2n}=\frac {2\pi^n}{(n-1)!}\int_0^1e^{-\tilde u(s)}s^{2n-1}ds\leq \\
\frac {2\pi^n}{(n-1)!}\int_0^1 e^{2n(-\tilde u(s)x)^{\frac {n+p}{n}}+y}s^{2n -1}ds\leq \frac {2\pi^n}{(n-1)!}\frac {c}{2n}e^y\, ,
\end{multline*}
and finally
\[
\log \int_{\mathbb{B}}e^{-u(z)}d\lambda_{2n}\leq \log \left(\frac {\pi^n c}{n!}\right)+ \left(\frac {e_pp^{p-1}}{(4\pi)^n(n+1)^{n+1}(n+p)^{p-1}}\right)^{\frac 1p}\, .
\]
\end{proof}

A direct consequence of~(\ref{moser3}) is the following corollary which was first proved in~\cite{BB2} in the case $p=1$.

\begin{corollary}
Let $p>0$, and let $u$ be a radially symmetric plurisubharmonic function such that $\lim_{z\to \partial \mathbb{B}}u(z)=0$ and $u\in \mathcal E_p(\mathbb{B})$, then we have that
\[
\int_{\mathbb{B}}e^{\alpha(p,n)(-u(z))^{\frac {n+p}{n}}e_p(u)^{-\frac 1n}}d\lambda_{2n}<\infty\, ,
\]
where $\alpha(p,n)=4\pi np^{\frac 1n}\left(\frac {n+1}{n+p}\right)^{\frac {n+1}{n}}$.
\end{corollary}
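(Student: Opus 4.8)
The plan is to read the statement off directly from inequality~(\ref{moser3}), which was derived along the way in the proof of Theorem~\ref{rsep}, after identifying the constant $\alpha(p,n)$ with the constant that implicitly appears there. First I would dispose of the degenerate case: if $e_p(u)=0$, then by Lemma~\ref{ep lemma} the nonnegative integrand $(-\tilde u(t))^{p-1}\tilde u'(t)^{n+1}t^n$ vanishes for almost every $t$, which — since $\tilde u$ is convex with respect to $\log t$ and increasing to $0$ at $t=1$ — forces $\tilde u\equiv 0$, hence $u\equiv 0$; then the integrand is identically $1$ and the integral equals $\lambda_{2n}(\mathbb B)<\infty$, so there is nothing to prove.

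So assume $e_p(u)>0$, and recall from the proof of Theorem~\ref{rsep} that with
\[
x=\left(\frac{(2\pi)^np(n+1)^{n+1}}{e_p(u)(n+p)^{n+1}}\right)^{\frac{1}{n+p}}
\]
the rescaled function $v(t)=-(-x\tilde u(t))^{\frac{n+p}{n+1}}$ satisfies $\int_0^1(v'(s))^{n+1}s^n\,ds\le 1$, so the modified Moser--Trudinger inequality used there gives
\[
\int_0^1 e^{2n(-x\tilde u(s))^{\frac{n+p}{n}}}s^{2n-1}\,ds\le\frac{c}{2n}\, ,
\]
with $c=c(p,n)$ independent of $u$.

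The only genuine computation is then the algebraic identity $2n\,x^{\frac{n+p}{n}}=\alpha(p,n)\,e_p(u)^{-\frac1n}$: raising $x$ to the power $\tfrac{n+p}{n}$ and using $((2\pi)^n)^{1/n}=2\pi$ yields $x^{\frac{n+p}{n}}=2\pi\,p^{1/n}\left(\frac{n+1}{n+p}\right)^{\frac{n+1}{n}}e_p(u)^{-1/n}$, and multiplying by $2n$ gives exactly $\alpha(p,n)e_p(u)^{-1/n}$. Hence $2n(-x\tilde u(s))^{\frac{n+p}{n}}=\alpha(p,n)e_p(u)^{-\frac1n}(-\tilde u(s))^{\frac{n+p}{n}}$, and passing to polar coordinates as in the proof of Theorem~\ref{rsep} I would conclude
\[
\int_{\mathbb B}e^{\alpha(p,n)(-u(z))^{\frac{n+p}{n}}e_p(u)^{-\frac1n}}\,d\lambda_{2n}=\frac{2\pi^n}{(n-1)!}\int_0^1 e^{2n(-x\tilde u(s))^{\frac{n+p}{n}}}s^{2n-1}\,ds\le\frac{\pi^n c}{n!}<\infty\, .
\]
There is no substantive obstacle here; the only points requiring care are tracking the constants so as to verify that $2n\,x^{\frac{n+p}{n}}$ really coincides with $\alpha(p,n)e_p(u)^{-1/n}$, and noting that the case $e_p(u)=0$ must be handled separately since $e_p(u)^{-1/n}$ is otherwise undefined.
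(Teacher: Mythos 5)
Your proposal is correct and follows essentially the same route as the paper: invoke inequality~(\ref{moser3}) from the proof of Theorem~\ref{rsep}, check the algebraic identity $2n\,x^{\frac{n+p}{n}}=\alpha(p,n)\,e_p(u)^{-\frac1n}$, and pass to polar coordinates to bound the integral by $\frac{c\pi^n}{n!}$. Your separate treatment of the degenerate case $e_p(u)=0$ is a small extra precaution the paper omits, but it does not change the substance of the argument.
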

\begin{proof}
By (\ref{moser3}) we have
\begin{multline*}
\int_{\mathbb{B}}e^{\alpha(p,n)(-u(z))^{\frac {n+p}{n}}e_p(u)^{-\frac 1n}}d\lambda_{2n}=\frac {2\pi^n}{(n-1)!}\int_0^1e^{\alpha(p,n)(-\tilde u(s))^{\frac {n+p}{n}}e_p(u)^{-\frac 1n}}s^{2n-1}\, ds \\
 < \frac {c\pi^n}{n!}<\infty.
\end{multline*}
\end{proof}

\section{The pluricomplex Sobolev inequality}\label{sec_sobolev}

In this section we shall prove the pluricomplex Sobolev inequality. We shall prove it for differences of plurisubharmonic functions with finite energy, i.e.
 for functions in $\delta\Ep=\Ep-\Ep$. If we for $u=u_1-u_2\in \delta\Ep$ define $\vertiii{u}_p$ by
\[
\vertiii{u}_p=\inf_{u_1-u_2=u \atop u_1,u_2\in \mathcal{E}_p} e_{p}(u_1+u_2)^{\frac {1}{n+p}}\, ,
\]
then $(\delta\Ep, \|\cdot\|_p)$ becomes a quasi-Banach space, and for $p=1$ a Banach space (see \cite{mod}). Note that in the case $u\in \Ep$ we have that  $\vertiii{u}_p=e_p(u)^{\frac 1{n+p}}$.

\begin{theorem}\label{lq estimate}
Let $\Omega$ be a bounded hyperconvex domain in $\C^n$, and let $u\in \delta\Ep$, $p>0$. Then for all $q>0$ there exists a
constant $C(p,q,n,\Omega)>0$ depending only on $p,q, n,\Omega$ such that
\begin{equation}\label{lq theorem}
\|u\|_{L^q}\leq C(p,q,n,\Omega)\vertiii{u}_p\, .
\end{equation}
In fact one can take
\begin{equation}\label{const c}
C(p,q,n,\Omega)=e^{\frac 1qA(p,n,\Omega)}\frac {(n+p)B(p,n,\Omega)^{\frac {p}{n+p}}}{n^{\frac {n}{n+p}}p^{\frac {p}{n+p}}}\, \Gamma\left(\frac {nq}{n+p}+1\right)^{\frac 1q},
\end{equation}
where the constants $A(p,n,\Omega)$ and $B(p,n,\Omega)$ are given in Theorem~\ref{t11}.
In addition, inequality (\ref{lq theorem}) may be written, for $q\geq 1$, in the form
\begin{equation}\label{q}
\|u\|_{L^q}\leq D(p,n,\Omega)q^{\frac {n}{n+p}}\vertiii{u}_p\, ,
\end{equation}
where the constant $D(p,n,\Omega)$ does not depend on $q$.
Furthermore, the identity operator $\iota:\delta\Ep\to L^q$ is compact.
\end{theorem}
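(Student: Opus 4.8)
The plan is to deduce the $L^q$ estimate directly from the pluricomplex Moser--Trudinger inequality (Theorem~\ref{t11}) via the elementary distributional identity~(\ref{ep eq_1}). First I would reduce to the case $u\in\Ep$: if $u=u_1-u_2$ with $u_1,u_2\in\Ep$, then $|u|\leq -(u_1+u_2)=:-v$ with $v\in\Ep$, so $\|u\|_{L^q}\leq\|v\|_{L^q}$, and it suffices to bound $\|v\|_{L^q}$ in terms of $e_p(v)^{1/(n+p)}=\vertiii{u}_p$ (after taking the infimum over representations). So from now on work with a single $u\in\Ep$, $u\leq 0$, and estimate $\|u\|_{L^q}^q=\int_\Omega(-u)^q\,d\lambda_{2n}$.

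The key computation is to write, using~(\ref{ep eq_1}) with $f=-u$,
\begin{equation*}
\int_\Omega(-u)^q\,d\lambda_{2n}=q\int_0^\infty t^{q-1}\lambda_{2n}(\{-u\geq t\})\,dt.
\end{equation*}
For the distribution function I would use the Chebyshev-type bound coming from Moser--Trudinger: for any $s>0$,
\begin{equation*}
\lambda_{2n}(\{-u\geq t\})=\lambda_{2n}(\{e^{-su}\geq e^{st}\})\leq e^{-st}\int_\Omega e^{-su}\,d\lambda_{2n}\leq e^{-st}\exp\!\big(A(p,n,\Omega)+B(p,n,\Omega)\,s^{\frac np}e_p(u)^{\frac1p}\big),
\end{equation*}
where in the last step I replaced $u$ by $su\in\Ep$ and used $e_p(su)=s^{n+p}e_p(u)$, so that $e_p(su)^{1/p}=s^{(n+p)/p}e_p(u)^{1/p}$; hence the exponent is $A+B\,s^{(n+p)/p}e_p(u)^{1/p}-st$. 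Now optimise in $s$: choosing $s$ so that the $t$-linear term dominates appropriately, one is led to a bound of the shape $\lambda_{2n}(\{-u\geq t\})\leq e^A\exp(-c\,t^{(n+p)/n}e_p(u)^{-1/n})$ for a constant $c$ built from $B(p,n,\Omega)$, $n$, $p$ (concretely, minimising $B s^{(n+p)/p}E-st$ over $s>0$, with $E=e_p(u)^{1/p}$, gives minimum value $-c\,t^{(n+p)/n}E^{-n/p}$ with $c=\frac{p}{n+p}\big(\frac{n}{(n+p)B}\big)^{n/p}$, which is exactly the constant appearing in~(\ref{const c})). Substituting this Gaussian-type tail into the layer-cake integral and evaluating $\int_0^\infty t^{q-1}e^{-c\,t^{(n+p)/n}E^{-n/p}}\,dt$ by the substitution $\tau=c\,t^{(n+p)/n}E^{-n/p}$ produces a Gamma function $\Gamma\!\big(\frac{nq}{n+p}+1\big)$ together with the stated power of $e_p(u)$, namely $e_p(u)^{nq/(p(n+p))}$, whose $q$-th root is $e_p(u)^{n/(p(n+p))}$; combined with the leftover factor $e_p(u)^{?}$ one checks the total power of $e_p(u)$ is $\frac{1}{n+p}$, giving~(\ref{lq theorem}) with the constant~(\ref{const c}). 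The reformulation~(\ref{q}) then follows from Stirling: $\Gamma(\frac{nq}{n+p}+1)^{1/q}\leq D\,q^{n/(n+p)}$ for $q\geq 1$, absorbing the $q$-independent factors into $D(p,n,\Omega)$.

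Finally, for compactness of $\iota:\delta\Ep\to L^q$ I would argue as follows: take a bounded sequence $\{u_j\}\subset\delta\Ep$, so $\vertiii{u_j}_p\leq M$; by~(\ref{lq theorem}) all the $u_j$, decomposed as $u_j=u_j^1-u_j^2$ with $e_p(u_j^1+u_j^2)$ bounded, lie in a set with uniformly bounded energy, and classical compactness for $\Ep$ (weak-$*$/$L^1_{\mathrm{loc}}$ compactness of plurisubharmonic functions with uniformly bounded mass, together with the uniform integrability furnished by applying~(\ref{lq theorem}) with an exponent $q'>q$) lets one extract a subsequence converging in $L^1_{\mathrm{loc}}$, hence a.e., hence—by the uniform $L^{q'}$ bound and Vitali—in $L^q$. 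The main obstacle is this last point: one needs the uniform higher-integrability estimate (which the Moser--Trudinger bound supplies, since it controls $\int e^{-su}$ for \emph{all} $s$, hence every $L^{q'}$ norm) and one must make sure the limit of the differences still lies in $\delta\Ep$ and that the convergence passes to the boundary correctly; the $L^q$-estimate itself is then just bookkeeping with the layer-cake formula and the optimisation in $s$.
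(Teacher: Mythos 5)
Your proposal is correct and follows essentially the same route as the paper: Chebyshev with $e^{-su}$ combined with the scaling $e_p(su)=s^{n+p}e_p(u)$ fed into Theorem~\ref{t11}, a Legendre-transform optimisation in $s$ giving the tail bound $\lambda_{2n}(\{-u\geq t\})\leq e^{A}e^{-c\,t^{(n+p)/n}e_p(u)^{-1/n}}$, the layer-cake formula producing the Gamma factor, Stirling for~(\ref{q}), and a.e.\ convergence plus a uniform higher $L^{q'}$ bound for compactness (your Vitali step is in fact spelled out more carefully than in the paper). The only blemish is an $n\leftrightarrow p$ transposition in your optimised constant: the minimum of $Bs^{(n+p)/p}E-st$ is $-\tfrac{n}{n+p}\bigl(\tfrac{p}{(n+p)B}\bigr)^{p/n}t^{(n+p)/n}E^{-p/n}$, so the factor is $E^{-p/n}=e_p(u)^{-1/n}$ rather than $E^{-n/p}$, after which the bookkeeping closes exactly as you assert.
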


Before we start the proof let us recall the definition of compactness in quasi-Banach spaces.

\begin{definition}
Let $X$, $Y$ be two quasi-Banach spaces. The operator $K:X\to Y$ is called \emph{compact} if for any sequence $\{x_n\}\subset X$ with $\|x_n\|\leq 1$, then there exists a convergent subsequence $\{y_{n_k}\}$  of $\{K(x_n)\}$.
\end{definition}

\begin{proof}[Proof of Theorem~\ref{lq estimate}]
First assume that $u\in \Ep$, $p>0$. For $t,s>0$ define
\[
f(t)=\int_{\Omega}e^{-tu}d\lambda_{2n} \, \text { and } \, \lambda(s)=\lambda_{2n}(\{z\in\Omega: u(z)<-s\}).
\]
Note that by Theorem~\ref{t11} there exist constants $A=A(p,n,\Omega)$ and $B=B(p,n,\Omega)$ such that
\[
f(t)\leq e^Ae^{Bt^{\frac {n+p}{p}}e_p(u)^{\frac 1p}}=Ce^{g(t)}\, ,
\]
where $g(t)=Bt^{\frac {n+p}{p}}e_p(u)^{\frac 1p}$. For $s,t>0$ we have that
\begin{equation}\label{legendre transform}
\lambda(s)\leq \int_{\{z\in\Omega: u(z)<-s\}}e^{-st}e^{-tu}d\lambda_{2n}\leq e^{-st}\int_{\Omega}e^{-tu}d\lambda_{2n}\leq Ce^{-st+g(t)}.
\end{equation}
By Lemma~\ref{l1} we now have that
\[
g(t)-st=Bt^{\frac {n+p}{p}}e_p(u)^{\frac 1p}-st\geq -s^{\frac {n+p}{p}}\frac {np^{\frac pn}}{(n+p)^{1+\frac pn}}B^{-\frac pn}e_p(u)^{-\frac 1n}\, .
\]
Therefore, it follows from~(\ref{legendre transform}) that
\begin{equation}\label{lambda}
\lambda(s)\leq Ce^{-xs^{\frac {n+p}{p}}}, \, \text { where }\, x=\frac {np^{\frac pn}}{(n+p)^{1+\frac pn}}B^{-\frac pn}e_p(u)^{-\frac 1n}.
\end{equation}
By letting $r=xs^{\frac {n+p}{n}}$ in~(\ref{lambda}) we get that
\begin{multline}\label{2}
\|u\|_{L^q}^q=\int_{\Omega}(-u)^qd\lambda_{2n}=q\int_{0}^{\infty}s^{q-1}\lambda(s)ds\leq qC\int_0^{\infty}s^{q-1}e^{-xs^{\frac {n+p}{p}}}ds\\
=\frac {qnC}{(n+p)x^{\frac {nq}{n+p}}}\int_{0}^{\infty}r^{-1+\frac {nq}{n+p}}e^{-r}\,dr=\frac {qnC}{(n+p)x^{\frac {nq}{n+p}}}\, \Gamma \left (\frac {nq}{n+p}\right)\\
=Cx^{-\frac {nq}{n+p}}\, \Gamma \left (\frac {nq}{n+p}+1\right)=\frac {C(n+p)^{q}B^{\frac {pq}{n+p}}}{n^{\frac {nq}{n+p}}p^{\frac {pq}{n+p}}}\, \Gamma\left(\frac {nq}{n+p}+1\right)e_p(u)^{\frac {q}{n+p}}\, .
\end{multline}
Thus, (\ref{lq theorem}) holds for $u\in \Ep$. In the general case, let $u=u_1-u_2\in \delta\Ep$ it is enough to note that
\[
\|u\|_{L^q}\leq \|u_1+u_2\|_{L^q}\leq C(p,q,n,\Omega)e_p(u_1+u_2)\, ,
\]
and then taking the infimum over all possible decomposition of $u$.

Next we shall prove (\ref{q}). First assume that $u\in \Ep$. Note that for $y\geq 1$ it is a fact that
\[
\Gamma(y+1)\leq 2y^y\, ,
\]
so by (\ref{2}) it holds that for $q\geq 1$
\begin{multline*}
\|u\|_{L^q}\leq \left(\frac {C(n+p)^{q}B^{\frac {pq}{n+p}}}{n^{\frac {nq}{n+p}}p^{\frac {pq}{n+p}}}\Gamma\left(\frac {nq}{n+p}+1\right)e_p(u)^{\frac {q}{n+p}}\right)^{\frac 1q} \\
\leq 2^{\frac 1q}e^{\frac Aq}\frac {(n+p)B^{\frac {p}{n+p}}}{n^{\frac {n}{n+p}}p^{\frac {p}{n+p}}}\left(\frac {nq}{n+p}\right)^{\frac {n}{n+p}}e_p(u)^{\frac {1}{n+p}}\\
\leq 2e^{A}\frac {(n+p)^{\frac {p}{n+p}}B^{\frac {p}{n+p}}}{p^{\frac {p}{n+p}}}q^{\frac {n}{n+p}}e_p(u)^{\frac {1}{n+p}}\, .
\end{multline*}
To proceed to the general case $u=u_1-u_2\in \delta\Ep$ we follow the above procedure and arrive at
\[
\|u\|_{L^q}\leq D(p,n,\Omega)q^{\frac {n}{n+p}}\vertiii{u}_p\, ,
\]
with
\[
D(p,n,\Omega)=2e^{A(p,n,\Omega)}\frac {(n+p)^{\frac {p}{n+p}}B(p,n,\Omega)^{\frac {p}{n+p}}}{p^{\frac {p}{n+p}}}\, .
\]
To complete this proof we shall prove that the identity operator $\iota:\delta\Ep\to L^q$ is compact. Take a sequence $\{u_n\}=\{u_n^1-u_n^2\}\subset \delta\Ep$ with $\vertiii{u}_p\leq 1$. Then by the same reasoning as above we get that
\[
\|u_n^j\|_{L^q}\leq \|u_n^1+u_n^2\|_{L^q}\leq C(p,n,\Omega)\quad \text{for} \quad j=1,2\, .
\]
Hence, there exists a subsequence $\{u_{n_k}^j\}$ converging almost everywhere to some plurisubharmonic function $\{v^j\}$. This means that $\{u_{n_k}^1-u_{n_k}^2\}$ is a Cauchy sequence in $L^q$. Thus, $\iota$ is compact.
\end{proof}

The proof of Theorem~\ref{lq estimate} relies on the Moser-Trudinger inequality (Theorem~\ref{t11}). In the case when $q\leq n+p$,
we can present an elementary proof only using the inequalities in Lemma~\ref{est} and Theorem~\ref{thm_holder}.

\begin{proof}[Proof of Theorem~\ref{lq estimate} for $q\leq n+p$]
There exists $\varphi_0\in \Eo$ such that
\[
(dd^c\varphi_0)^n=d\lambda_{2n}\, ,
\]
(see e.g.~\cite{kol2}). Let $u=u_1-u_2\in \delta\Ep$, and let $0<q\leq p+n$. Thanks to Lemma~\ref{est} and Theorem~\ref{thm_holder} we get that
\begin{multline*}
\|u\|_{L^q}\leq \|u_1+u_2\|_{L^q}\leq \lambda_{2n}(\Omega)^{\frac 1q-\frac{1}{p+n}}\|u_1+u_2\|_{L^{p+n}}= \\
=\lambda_{2n}(\Omega)^{\frac 1q-\frac{1}{p+n}}\left(\int_{\Omega} (-u_1-u_2)^{p+n}(dd^c \varphi_0)^n\right)^{\frac 1{n+p}}\leq \\
\leq \lambda_{2n}(\Omega)^{\frac 1q-\frac{1}{p+n}}\left((p+n)\cdots (p+1)\|\varphi_0\|^n_{L^{\infty}}\int_{\Omega}(-u_1-u_2)^{p}(dd^c(u_1+u_2))^n\right)^{\frac {1}{n+p}}\leq \\
\leq \left(\lambda_{2n}(\Omega)^{\frac{p+n-q}{q}}(p+n)\cdots (p+1)\|\varphi_0\|_{L^{\infty}}^n\right)^{\frac {1}{n+p}}e_p(u_1+u_2)^{\frac {1}{n+p}}\, .
\end{multline*}
Finally by taking the infimum over all possible decompositions $u=u_1-u_2$ we obtain that
\[
\|u\|_{L^q}\leq C(p,q,n,\Omega)\vertiii{u}_p\, .
\]
\end{proof}

Next we present an example that shows that it is impossible to have an estimate of the type
\[
e_p(u)^{\frac {1}{n+p}}\leq C\|u\|_{L^q}\, .
\]

\begin{example}\label{ex1}
Consider the following functions defined on the unit ball $\mathbb{B}$ in $\C^n$
\[
u_j(z)=\frac 1j\max\left(\log |z|,-j^{1+\frac np}\right)\, .
\]
Then we have that
\[
u_j(z)=
\begin{cases}
\frac{1}{j} \log |z| & \text{ if }   \exp\left(-j^{1+\frac np}\right)\leq |z|\leq 1 \\[2mm]
-j^{\frac np} & \text{ if } 0\leq |z|\leq \exp\left(-j^{1+\frac np}\right)\, .
\end{cases}
\]
Hence, $\|u_j\|_{L^q}\to 0$, as $j\to \infty$, but at the same time we have that
\[
e_p(u)=\frac {1}{j^{n+p}}(2\pi)^n\left(j^{1+\frac np}\right)^p=(2\pi)^n\, ,
\]
which is a contradiction.
\hfill{$\Box$}
\end{example}

Example~\ref{ex2} shows that it is also impossible to have an estimate of the type
\[
\|u\|_{L^{\infty}}\leq C e_p(u)^{\frac {1}{n+p}}\, .
\]

\begin{example}\label{ex2}
Similarly as in Example~\ref{ex1} consider the following functions defined on the unit ball $\mathbb{B}$ in $\C^n$
\[
u_j(z)=\frac 1{j^{\frac {p}{n+p}}}\max\left(\log |z|,-j\right)\, .
\]
Then we have that $\|u_j\|_{L^{\infty}}=-u_j(0)=j^{\frac {n}{n+p}}\to \infty$, as $j\to \infty$, and at the same time
\[
e_p(u_j)=(2\pi)^nj^p\left(\frac 1{j^{\frac {p}{n+p}}}\right)^{n+p}=(2\pi)^n
\]
and a contradiction is obtained. \hfill{$\Box$}
\end{example}

Finally we present an example that shows that it is impossible to have an estimate of the type
\[
e_p(u)^{\frac {1}{n+p}}\leq C \|u\|_{L^{\infty}}\, .
\]

\begin{example}\label{ex3}
Similarly as before we consider the following functions defined on the unit ball $\mathbb{B}$ in $\C^n$
\[
u_j(z)=j\max\left(\log |z|,-\frac 1j\right)\, .
\]
Then we have that $\|u_j\|_{L^{\infty}}=-u_j(0)=1$ and at the same time
\[
e_p(u_j)=(2\pi)^nj^{n+p}\left(\frac 1j\right)^p=(2\pi)^nj^n\to \infty
\]
and a contradiction is obtained. \hfill{$\Box$}
\end{example}

Next in Corollary~\ref{t2_cor} we prove the corresponding Sobolev estimate~(\ref{lq estimate}) for functions in $\mathcal E_{\chi}$. For the definition of $\Ec$ see Definition~\ref{def_Echi} on page~\pageref{def_Echi}.

\begin{corollary}\label{t2_cor} Let $\chi:(-\infty,0]\to (-\infty,0]$ be a continuous and nondecreasing function, let $\Omega$ be a bounded hyperconvex domain in $\mathbb C^n$, $n\geq 2$, and let $u\in \Ec$. Then for all $q>0$ there exists a
constant $G(n,\Omega)\geq 0$ depending only on $n$ and $\Omega$ such that
\begin{multline*}
\|u\|_{L^q}\leq \\ G(n,\Omega)^{\frac 1q}\Gamma(q+1)^{\frac 1q}\left(\frac {e_{\chi}(u)}{(2\epsilon n)^n}\right)^{\frac 1n}\left(\inf_{s>0}s^{-q}\exp\left(-\left(\frac {e_{\chi}(u)}{(2\epsilon n)^n}\right)^{-\frac 1n}s\chi^{-1}(-s^n)\right)\right)^{\frac 1q}\, .
\end{multline*}
\end{corollary}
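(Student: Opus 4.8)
The plan is to mimic the proof of Theorem~\ref{lq estimate} (for $u\in\Ep$), replacing the use of Theorem~\ref{t11} with its $\Ec$-counterpart, Corollary~\ref{t11_cor}. Write $a(\epsilon)=\left(\pi^n+\beta(n)\frac{\epsilon n}{(n-\epsilon n)^n}\right)\operatorname{diam}(\Omega)^{2n}$ for the quantity inside the logarithm in Corollary~\ref{t11_cor}, and set $G(n,\Omega)$ to be (an upper bound for) $a(\epsilon)$; note $a(\epsilon)$ depends only on $n,\Omega$ and the fixed $\epsilon$. The starting point is: for $u\in\Ec$, applying Corollary~\ref{t11_cor} to $tu$ (which again lies in $\Ec$, for $\chi$ rescaled appropriately — or, more directly, one observes $e_\chi(tu)$ can be controlled; the cleanest route is to use that $tu\in\mathcal E_{\chi_t}$ with $\chi_t(r)=\chi(r/t)$, so that $e_{\chi_t}(tu)=e_\chi(u)$ and $\chi_t^{-1}(s)=t\,\chi^{-1}(s)$), we obtain
\[
\int_\Omega e^{-tu}\,d\lambda_{2n}\leq a(\epsilon)\exp\!\left(-t\,\chi^{-1}\!\left(\frac{-e_\chi(u)}{(2\epsilon n)^n}\right)\right)\qquad(t>0).
\]

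Next I would run the Chebyshev/Legendre-transform step exactly as in the main proof. For $\lambda(s)=\lambda_{2n}(\{u<-s\})$ one has $\lambda(s)\leq e^{-st}\int_\Omega e^{-tu}\,d\lambda_{2n}$, hence
\[
\lambda(s)\leq a(\epsilon)\,\exp\!\left(-st - t\,\chi^{-1}\!\left(\tfrac{-e_\chi(u)}{(2\epsilon n)^n}\right)\right).
\]
Here, unlike in Theorem~\ref{lq estimate} where the exponent was a power of $t$ and one could optimize in closed form via Lemma~\ref{l1}, the exponent is affine in $t$; so optimizing over $t>0$ is only useful after a change of variables. Set $E=\left(\frac{e_\chi(u)}{(2\epsilon n)^n}\right)^{1/n}$ and substitute $t\mapsto t/E$ (equivalently absorb $E$), which turns the bound, after also rescaling $s$, into one of the form $\lambda(s)\leq a(\epsilon)\inf_{t>0}\exp(-st/E - (t/E)\chi^{-1}(\cdots))$; the correct bookkeeping is to write it so that the infimum over $s$ of $s^{-q}\exp(-E^{-1}s\,\chi^{-1}(-s^n))$ appears, which is precisely the factor in the statement. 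Concretely, one shows
\[
\lambda(s)\leq a(\epsilon)\,\exp\!\left(-E^{-1}\,s\,\chi^{-1}(-s^n)\right)
\]
after the appropriate substitution (using monotonicity of $\chi^{-1}$ and the scaling $\chi^{-1}(-s^n)$ matching the $p$-energy case $\chi=-(-t)^p$, where $\chi^{-1}(-s^n)=-s^{n/p}$, recovering the exponent $s^{(n+p)/p}$ of the original proof).

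Then I would finish with the layer-cake formula~(\ref{ep eq_1}): $\|u\|_{L^q}^q=q\int_0^\infty s^{q-1}\lambda(s)\,ds$. Inserting the bound on $\lambda$ and pulling out the worst factor,
\[
\|u\|_{L^q}^q\leq a(\epsilon)\,q\left(\inf_{s>0}s^{-q}\exp(-E^{-1}s\,\chi^{-1}(-s^n))\right)\int_0^\infty s^{q-1}\cdot s^{q}\cdot(\dots)\,ds,
\]
where one bounds $\lambda(s)\leq \left(\inf_{r>0} r^{-q}e^{-E^{-1}r\chi^{-1}(-r^n)}\right)s^q$ crudely and is left with $\int_0^\infty s^{q-1}e^{-s}\,ds$-type integral producing $\Gamma(q+1)$ — this is where the $\Gamma(q+1)^{1/q}$ in the statement comes from, and the extra factor of $E=\left(\frac{e_\chi(u)}{(2\epsilon n)^n}\right)^{1/n}$ comes from a final rescaling of the integration variable. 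Taking $q$-th roots yields the claimed inequality, with $G(n,\Omega)$ chosen as $a(\epsilon)$ (finite since $0<\epsilon<1$ is fixed).

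The main obstacle is purely organizational rather than deep: getting the scaling of the variable $s$ and the energy factor $E$ to line up so that the final expression is exactly $G(n,\Omega)^{1/q}\Gamma(q+1)^{1/q}E\left(\inf_s s^{-q}\exp(-E^{-1}s\,\chi^{-1}(-s^n))\right)^{1/q}$ — in particular verifying that $tu\in\Ec$ in a form to which Corollary~\ref{t11_cor} applies with the right $\chi^{-1}$, and that $\chi^{-1}$ being only nondecreasing (not strictly increasing) causes no trouble (it does not, since $\chi^{-1}$ appears inside an exponential which we then bound from above, and we only need $\chi^{-1}$ well-defined as $\chi^{-1}(y)=\sup\{r:\chi(r)\le y\}$, consistently with its use in Corollary~\ref{t11_cor}). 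No genuinely new estimate beyond Corollary~\ref{t11_cor} and~(\ref{ep eq_1}) is needed.
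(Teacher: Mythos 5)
The paper disposes of this corollary in one line (``a straightforward modification of the proof of Theorem~\ref{lq estimate}''), so your job was to supply that modification. Your overall strategy --- apply Corollary~\ref{t11_cor} to $tu$ via a rescaled weight $\chi_t$, run the Chebyshev bound on $\lambda(s)=\lambda_{2n}(\{u<-s\})$, and identify $G(n,\Omega)$ with $a(\epsilon)$ --- is the right one, but two steps are genuinely wrong. First, the energy scaling: with $\chi_t(r)=\chi(r/t)$ one has $(dd^c(tu_j))^n=t^n(dd^cu_j)^n$, hence $e_{\chi_t}(tu)=t^n e_\chi(u)$, not $e_\chi(u)$ as you claim. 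Corollary~\ref{t11_cor} applied to $tu\in\mathcal E_{\chi_t}$ therefore gives $\int_\Omega e^{-tu}\,d\lambda_{2n}\le a(\epsilon)\exp\bigl(-t\,\chi^{-1}\bigl(-t^n e_\chi(u)/(2\epsilon n)^n\bigr)\bigr)$, and the factor $t^n$ inside $\chi^{-1}$ is precisely what produces the argument $-s^n$ in the stated inequality after substituting $t=s/E$ with $E=\bigl(e_\chi(u)/(2\epsilon n)^n\bigr)^{1/n}$ (then $t^nE^n=s^n$ and $t^{-q}=E^q s^{-q}$). With your version $\chi^{-1}$ is evaluated at the constant $-E^n$, and the stated formula cannot be reached.

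Second, the final integration does not close as you sketch it. From $\lambda(s)\le a(\epsilon)\exp(-E^{-1}s\chi^{-1}(-s^n))$ you try to pull out $\inf_{r>0}r^{-q}\exp(-E^{-1}r\chi^{-1}(-r^n))$ by writing $\lambda(s)\le\bigl(\inf_r\cdots\bigr)s^q$; but since $s^{-q}\exp(-E^{-1}s\chi^{-1}(-s^n))\ge\inf_r(\cdots)$, that replacement is a \emph{lower} bound, and in any case the surviving integrand $s^{q-1}\cdot s^q$ has no decay, so no $\Gamma(q+1)$ can emerge. The repair is to keep $t$ fixed, independent of $s$, when integrating: $\|u\|_{L^q}^q=q\int_0^\infty s^{q-1}\lambda(s)\,ds\le a(\epsilon)\,e^{-t\chi^{-1}(-t^nE^n)}\,q\int_0^\infty s^{q-1}e^{-st}\,ds=a(\epsilon)\,\Gamma(q+1)\,t^{-q}e^{-t\chi^{-1}(-t^nE^n)}$, and only afterwards substitute $t=s/E$ and take the infimum over $s>0$ (equivalently, use the pointwise bound $(-tu)^q\le\Gamma(q+1)e^{-tu}$). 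This yields the corollary exactly as stated, with $G(n,\Omega)=a(\epsilon)$.
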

\begin{proof} This is a straight forward modification of the proof of Theorem~\ref{lq estimate}.
\end{proof}

We shall end this section with a remark about compact K\"{a}hler manifolds. This was first proved in~\cite{BB1} for the case $p=1$. The notation and background about the K\"ahler case are stated before the remark on page~\pageref{kahler}.

\begin{remark}\label{kahler2} Let $(X,\omega)$ be a compact K\"ahler manifold of dimension $n$ with a K\"ahler form $\omega$ such that $\int_{X}\omega^n=1$. Let $u\in \Ep$, $p>0$, and $k>0$. From (\ref{kahler 2}) we know that
\[
\log \left (\int_Xe^{-ku}\omega^n\right)\leq \frac {ak^{n+1}}{n!}e_p(u)^{\frac 1p}+b\, ,
\]
for some constants $a$ and $b$. By repeating the argument from the proof of Theorem~\ref{lq estimate} one can prove that there exists a constant $c$ depending only on $p, X$, and not on $q$, such that
\[
\|u\|_{L^q}\leq cq^{\frac {n}{n+p}}e_p(u)^{\frac {1}{n+p}}\, .
\]

\end{remark}

\section{On the Sobolev constant for the unit ball $\mathbb{B}$}\label{sec_optimal}

In this section let $C(p,q,n,\mathbb{B})$ be the infimum of all admissible constants in the Sobolev type inequality given
in~(\ref{lq theorem}). Our aim here is to show that
\[
C(1,1,n,\mathbb{B})=\frac {\pi^{\frac {n^2}{n+1}}}{4^{\frac {n}{n+1}}n!(n+1)^{\frac {n}{n+1}}}\, .
\]
We shall do it in two part as follows.
\begin{enumerate}

\item In Example~\ref{const1}, we derive that for $q\leq n+1$
\[
C(1,q,n,\mathbb{B})\leq \frac {\pi^{\frac {n(1+n-q)}{q(n+1)}}[1\cdots(1+\lceil q-1 \rceil)]^\frac {1}{1+\lceil q-1 \rceil}}{4^{\frac {n}{n+1}}(n!)^{\frac 1q}(n+1)^{\frac {n-\lceil q-1 \rceil}{(n+1)(1+\lceil q-1 \rceil)}}}\, ,
\]
where $\lceil \,\cdot\, \rceil$ is the ceiling function.

\item In Example~\ref{const2} we prove that
\[
C(p,1,n,\mathbb{B})\geq\frac {\pi^{\frac {n(n+p-1)}{n+p}}p^{\frac {p}{n+p}}}{4^{\frac n{n+p}}n!(n+p)(n\mathbf {B}(p+1,n))^{\frac {1}{n+p}}}\, ,
\]
where $\mathbf B$ is the beta function. We shall actually obtain a bit more general result in this example.
\end{enumerate}

\bigskip

\begin{example}\label{const1}
Let $\Omega=\mathbb{B}$ be the unit ball in $\mathbb C^n$, and on $\mathbb{B}$ define
\[
\varphi_0=\frac {1}{4\root n \of {n!}}(|z|^2-1)\, .
\]
Then $\varphi_0\in\Eo$, $(dd^c\varphi_0)^n=d\lambda_{2n}$, and
\[
e_p(\varphi_0)=\frac {n\pi^n}{4^p(n!)^{1+\frac pn}}\mathbf B(p+1,n)\, ,
\]
where $\mathbf B$ is the classical beta Euler function. Recall that if $q\leq n+p$, then the ceiling function evaluated at $q-p$ is defined by
\[
\lceil q-p \rceil =\min\{k\in \mathbb N: k\geq q-p\}\, .
\]
Once again thanks to Lemma~\ref{est} and Theorem~\ref{thm_holder} we get that
\begin{multline}\label{ineq2}
\|u\|_{L^q}\leq \lambda_{2n}(\mathbb{B})^{\frac 1q-\frac {1}{\lceil q-p \rceil+p}}\|u\|_{L^{p+\lceil q-p \rceil}}\leq \\
\leq  \lambda_{2n}(\mathbb{B})^{\frac 1q-\frac {1}{\lceil q-p \rceil+p}}\left((p+1)\cdots (p+\lceil q-p \rceil)\|\varphi_0\|^{\lceil q-p \rceil}_{L^{\infty}}\right. \\
\cdot\left.\int_{\Omega}(-u)^{p}(dd^cu)^{\lceil q-p \rceil}\wedge(dd^c\varphi_0)^{n-\lceil q-p \rceil}\right)^{\frac 1{p+\lceil q-p \rceil}}\\
\leq \lambda_{2n}(\mathbb{B})^{\frac 1q-\frac {1}{p+\lceil q-p \rceil}}\left((p+1)\cdots(p+\lceil q-p \rceil)\|\varphi_0\|^{\lceil q-p \rceil}_{L^{\infty}}d(p,n,\mathbb{B})\right.\\ \cdot \left. e_p(\varphi_0)^{\frac {n-\lceil q-p \rceil}{n+p}}e_p(u)^{\frac {p+\lceil q-p \rceil}{n+p}}\right)^{{\frac 1{p+\lceil q-p \rceil}}}=\\
=\frac{e_p(u)^{\frac {1}{n+p}}}{{4^{\frac {n}{n+p}}(n!)^{\frac 1q}}}\pi^{\frac {n(p+n-q)}{q(n+p)}}d(p,n,\mathbb{B})^{\frac {1}{p+\lceil q-p \rceil}}\Big(n \mathbf B(p+1,n)\Big)^{\frac {n-\lceil q-p \rceil}{(n+p)(p+\lceil q-p \rceil)}} \\
\cdot\left((p+1)\cdots(p+\lceil q-p \rceil)\right)^\frac {1}{p+\lceil q-p \rceil}\, .
\end{multline}

If $p=1$ we know that $d(1,n,\mathbb{B})=1$, and $n\mathbf B(2,n)=\frac {1}{n+1}$. Hence,
\begin{equation}\label{ineq5}
C(1,q,n,\mathbb{B})\leq \frac {\pi^{\frac {n(1+n-q)}{q(n+1)}}[(1+\lceil q-1 \rceil)!]^\frac {1}{1+\lceil q-1 \rceil}}{4^{\frac {n}{n+1}}(n!)^{\frac 1q}(n+1)^{\frac {n-\lceil q-1 \rceil}{(n+1)(1+\lceil q-1 \rceil)}}}.
\end{equation}
\hfill{$\Box$}
\end{example}

\begin{example}\label{const2}
For $\alpha>0$, $k>0$, define on the unit ball $\mathbb{B}$ in $\mathbb C^n$ the following family of functions
\[
u_{\alpha, k}(z)=k(|z|^{2\alpha}-1)\, .
\]
Then we have that
\[
e_p(u_{\alpha})=\int_{B(0,1)}(-u_{\alpha})^p\ddcn{u_{\alpha}}=k^{n+p}n(4\pi)^n\alpha^n\, \mathbf {B}(p+1,n)\, ,
\]
and
\begin{multline*}
\int_{\mathbb{B}}(-u_{\alpha,k}(z))^qd\lambda_{2n}=\frac {2\pi^nk^q}{(n-1)!}\int_{0}^1(1-t^{2\alpha})^qt^{2n-1}dt\\
=\frac {\pi^nk^q}{(n-1)!\alpha}\int_0^1(1-s)^qs^{\frac {n}{\alpha}-1}dr=\frac {\pi^nk^q}{(n-1)!\alpha}\mathbf B\left(q+1,\frac {n}{\alpha}\right).
\end{multline*}
Hence,
\[
C(p,q,n,\mathbb{B})\geq\frac {\|u_{\alpha,k}\|_{L^q}}{e_p(u_{\alpha,k})^{\frac {1}{n+p}}}=\frac {\pi^{\frac {n(n+p-q)}{q(n+p)}}n^{\frac {n+p-q}{q(n+p)}}}{4^{\frac n{n+p}}(n!)^{\frac 1q}\mathbf B(p+1,n)^{\frac {1}{n+p}}}\frac {\mathbf B\left(q+1,\frac {n}{\alpha}\right)^{\frac 1q}}{\alpha^{\frac {1}{q}+\frac {n}{n+p}}}\, .
\]
Now set $\beta=\frac {n}{\alpha}$, and $s=\frac {n}{n+p}$. With these notation we get that
\[
\frac {\mathbf B(q+1,\frac {n}{\alpha})^{\frac 1q}}{\alpha^{\frac {1}{q}+\frac {n}{n+p}}}=n^{-\frac {1}{q}-\frac {n}{n+p}}\left(\mathbf B(q+1,\beta)\beta^{1+q\frac {n}{n+p}}\right)^{\frac 1q}=n^{-\frac {1}{q}-\frac {n}{n+p}}f(\beta)^{\frac 1q}\, ,
\]
where
\[
f(\beta)=\mathbf B(q+1,\beta)\beta^{1+qs}\, .
\]
Next we want to find $\sup_{(0,\infty)}f(\beta)$. First we see that
\[
\lim_{\beta\to 0}f(\beta)=\lim_{\beta\to \infty}f(\beta)=0\, ,
\]
and that
\[
f'(\beta)=\mathbf B(q+1,\beta)\beta^{sq}\left(\beta(\psi(\beta)-\psi(\beta+q+1))+1+sq\right)\, ,
\]
where $\psi(x)$ is the classical digamma function.

In the case when $q\in \mathbb N$, it holds that
\[
\psi(\beta)-\psi(\beta+q+1)=-\sum_{j=0}^q\frac {1}{\beta+q-j}\, .
\]
Therefore for $q\in \mathbb N$ we have that $f'(\beta)=0$ if, and only if,
\begin{equation}\label{sol}
\sum_{j=0}^q\frac {\beta}{\beta+q-j}=1+qs\, .
\end{equation}
This implies that in the case $q=1$ the equation (\ref{sol}) have a solution given by
\[
\beta_0=\frac {s}{1-s}=\frac np\, .
\]
Using the standard equalities
\[
\mathbf B(x+1,y)=\frac {x}{x+y}\mathbf B(x,y)\, ,  \quad \text{ and } \quad \mathbf B(1,y)=\frac 1y\, ,
\]
we derive that
\[
\sup f(\beta)=\mathbf B(q+1,\beta_0)\beta_0^{1+s}=\frac {\beta_0^{s}}{1+\beta_0}=\frac {p^{\frac {p}{n+p}}n^{\frac {n}{n+p}}}{n+p}\, .
\]
Hence,
\begin{equation}\label{ineq6}
C(p,1,n,\mathbb{B})\geq\frac {\pi^{\frac {n(n+p-1)}{n+p}}p^{\frac {p}{n+p}}}{4^{\frac n{n+p}}n!(n+p)(n\mathbf B(p+1,n))^{\frac {1}{n+p}}}.
\end{equation}

In the case when $q\in \mathbb N$, $q\geq 2$, then we have that
\[
\beta_0=\frac {q+1}{2}\frac {s}{1-s}=\frac {(q+1)n}{2p}
\]
is a good approximation of a solution to the equation (\ref{sol}), and therefore
\[
\sup f(\beta)\geq \mathbf B\left(q+1,\frac {(q+1)n}{2p}\right)\left(\frac {(q+1)n}{2p}\right)^{1+\frac {n}{n+p}}\, .
\]
Thus,
\begin{equation}\label{ineq7}
C(p,q,n,\mathbb{B})\geq\frac {\pi^{\frac {n(n+p-q)}{q(n+p)}}n^{\frac {2n+p-nq}{q(n+p)}}(q+1)^{\frac {2n+p}{q(n+p)}}}{4^{\frac n{n+p}}(2p)^{\frac {2n+p}{q(n+p)}}(n!)^{\frac 1q}(nB(p+1,n))^{\frac {1}{n+p}}}\mathbf B\left(q+1,\frac {(q+1)n}{2p}\right)^{\frac 1q}\, .
\end{equation}
For $q\in\RE$, $q\geq 2$, one can insert the floor function evaluated at $q$, $\lfloor q\rfloor$, in~(\ref{ineq7}).
\hfill{$\Box$}
\end{example}


\begin{thebibliography}{30}


\bibitem{mod} \AA hag P., Czy\.z R., Modulability and duality of certain cones in pluripotential theory. J. Math. Anal. Appl. 361 (2010), 302-321.

\bibitem{AC1} \AA hag P., Czy\.z R., Radially symmetric plurisubharmonic
functions. Ann. Polon. Math. 106 (2012), 1-17.

\bibitem{czyz_energy} \AA hag P., Czy\.z R., Ph\d{a}m H. H., Concerning the energy class
$\Ep$ for $0<p<1$.  Ann. Polon. Math. 91 (2007), 119-130.

\bibitem{ACKPZ} \AA hag P., Cegrell U., Ko\l odziej S., Pham H. H., Zeriahi A., Partial pluricomplex energy and integrability exponents of plurisubharmonic functions. Adv. Math. 222 (2009), 2036-2058.

\bibitem{Baernstein}  Baernstein A. II, Taylor, B. A., Spherical rearrangements, subharmonic functions, and  $*$-functions in $n$-space. Duke Math. J.  43  (1976), 245-268.

\bibitem{beckner} Beckner W., Sharp Sobolev inequalities on the sphere and the Moser-Trudinger inequality. Ann. of Math. (2)  138 (1993), 213-242.


\bibitem{b} Benelkourchi S., Approximation of weakly singular plurisubharmonic functions. Internat. J. Math. 22 (2011), 937-946.

\bibitem{bgz}  Benelkourchi S., Guedj V., Zeriahi A., Plurisubharmonic functions with weak singularities. Complex analysis and digital geometry, pp. 57-74, Acta Univ. Upsaliensis Skr. Uppsala Univ. C Organ. Hist., 86, Uppsala Universitet, Uppsala, 2009.


\bibitem{BB1} Berman R. J., Berndtsson B., Moser-Trudinger type inequalities for complex Monge-Amp\`{e}re operators and Aubin's "Hypoth\`{e}se Fondamentale". Manuscript arXiv:1109.1263v1, (2011).

\bibitem{BB2} Berman R. J., Berndtsson B., Symmetrization of plurisubharmonic and convex functions. Indiana Univ. Math. J. 63 (2014), 345-365.

\bibitem{BBW} Berman R. J., Boucksom S., Witt Nystr\"om D., Fekete points and convergence towards equilibrium measures on complex manifolds. Acta Math.
207 (2011), 1-27.


\bibitem{Blo93} B{\l}ocki Z., Estimates for the complex Monge-Amp\`ere operator. Bull. Pol. Acad. Sci. Math. 41 (1993), 151-157.

\bibitem{Blo96} B{\l}ocki Z., The complex Monge-Amp\`ere operator in hyperconvex domains. Ann. Scuola
Norm. Sup. Pisa 23 (1996), 721-747.

\bibitem{Blo14} B{\l}ocki Z., Cauchy-Riemann meet Monge-Ampere. Bull. Math. Sci. 4 (2014), 433-480.


\bibitem{cegrell_pc} Cegrell U., Pluricomplex energy. Acta Math. 180 (1998), 187-217.

\bibitem{cegrell_gdm} Cegrell U., The general definition of the complex Monge-Amp\`{e}re operator. Ann. Inst. Fourier (Grenoble) 54 (2004), 159-179.

\bibitem{C1} Cegrell U., Measures of finite pluricomplex energy. Manuscript arXiv:1107.1899v1 (2011).

\bibitem{CKZ} Cegrell U., Ko{\l}odziej S., Zeriah A., Subextension of plurisubharmonic functions with weak singularities. Math. Z. 250 (2005), 7-22.

\bibitem{CP2} Cegrell U., Persson L.,  The Dirichlet problem for the complex Monge-Amp\`{e}re operator: stability in $L^2$. Michigan Math. J. 39 (1992), 145-151.

\bibitem{CP} Cegrell U., Persson L., An energy estimate for the complex Monge-Amp\`{e}re operator. Ann. Polon. Math. 67 (1997), 95-102.

\bibitem{CL} Cohn W. S., Lu G., Sharp constants for Moser-Trudinger inequalities on spheres in complex space $\mathbb{C}^n$. Comm. Pure Appl. Math. 57 (2004), 1458-1493.

\bibitem{hab} Czy\.{z} R., The complex Monge-Amp\`{e}re operator in the Cegrell classes. Dissertationes Math. 466 (2009), 83 pp.

\bibitem{demailly_bok} Demailly J.-P.: Complex analytic and
algebraic geometry, self published e-book.\\
(\texttt{http://www-fourier.ujf-grenoble.fr/$^{\sim}$demailly/}).


\bibitem{demailly_hiep} Demailly J.-P., Ph\d{a}m H. H., A sharp lower bound for the log canonical threshold. Acta Math. 212 (2014), 1-9.

\bibitem{Dinh_et al} Dinh T. C., Nguyen V. A. , Sibony N., Exponential estimates for plurisubharmonic functions and stochastic dynamics. J. Differential Geom.  84  (2010), 465-488.

\bibitem{diller}   Diller J., Dujardin R., Guedj V., Dynamics of meromorphic mappings with small topological degree II: Energy and invariant measure. Comment. Math.
Helv. 86 (2011), 277-316.

\bibitem{de} de Souza M., Some sharp inequalities related to Trudinger-Moser inequality. J. Math. Anal. Appl. 467 (2018), no. 2, 981-1012.


\bibitem{haihiep}  L\^{e} H. M. , Ph\d{a}m H. H., Some weighted energy classes of
plurisubharmonic functions. Potential Anal. 34 (2011), 43-56.

\bibitem{Guedj_Zer} Guedj V., Zeriahi A., The weighted Monge-Amp\`{e}re energy of quasiplurisubharmonic
functions. J. Funct. Anal. 250 (2007), 442-482.

\bibitem{klimek} Klimek M., Pluripotential theory. The Clarendon Press,
Oxford University Press, New York, (1991).

\bibitem{kol} Ko\l odziej S., Some sufficient conditions for solvability of the Dirichlet problem for the
complex Monge-Amp\`ere operator. Ann. Polon. Math. 65 (1996), 11-21.

\bibitem{kol2} Ko\l odziej S., The complex Monge-Amp\`{e}re equation and pluripotential theory. Mem. Amer. Math. Soc.  178  (2005),  no. 840, x+64 pp.

\bibitem{LL} Lam N., Lu G., A new approach to sharp Moser-Trudinger and Adams type inequalities: a rearrangement-free argument. J. Differential Equations 255
(2013), 298-325.

\bibitem{monn} Monn D. R., Regularity of the complex Monge-Amp\`{e}re equation for the radially
symmetric functions of the unit ball. Math. Ann. 275  (1986), 501-511.

\bibitem{moser} Moser J., A sharp form of an inequality by N. Trudinger. Indiana U. Math. J. 20 (1971), 1077-1092.

\bibitem{persson} Persson L., A Dirichlet principle for the complex Monge-Amp\`{e}re operator. Ark. Mat. 37 (1999), 345-356.

\bibitem{PSS} Phong D. H., Song J., Sturm. J., Complex Monge-Amp\`{e}re equations.  Surveys in differential geometry. Vol. XVII, pp. 327-410, Surv. Differ. Geom., 17, Int. Press, Boston, MA, 2012.

\bibitem{PSSW} Phong D. H., Song J., Sturm. J., Weinkove B., The Moser-Trudinger inequality on K\"ahler-Einstein manifolds. Amer. J. Math.  130 (2008), 1067-1085.

\bibitem{talenti} Talenti G.,  Best constant in Sobolev inequality. Ann. Mat. Pura Appl. (4)  110 (1976), 353-372.

\bibitem{talenti2} Talenti G., Elliptic equations and rearrangements. Ann. Scuola Norm. Sup. Pisa Cl. Sci. (4)  3 (1976), 697-718.

\bibitem{trudinger} Trudinger N. S., On imbeddings into Orlicz spaces and some applications. J. Math. Mech.  17 (1967), 473-483.

\bibitem{yuan} Yuan A., Zhu X., An improved singular Trudinger-Moser inequality in unit ball. J. Math. Anal. Appl. 435 (2016), no. 1, 244-252.

\end{thebibliography}
\end{document}